\renewcommand\hat{\widehat}
\def\e{\epsilon}
\def\P{\mathbb{P}}
\def\1{\bf{1}}
\def\R{\mathbb{R}}
\def\Z{\mathbb{Z}}
\def\1{{\bf 1}}
\newtheorem{theorem}{Theorem}[section]
\newtheorem{lemma}[theorem]{Lemma}
\newtheorem{proposition}[theorem]{Proposition}
\newtheorem{definition}[theorem]{Definition}
\newtheorem{ex}[theorem]{Example}
\newtheorem{assumption}[theorem]{Assumption}
\theoremstyle{definition}
\newtheorem{remark}[theorem]{Remark}
\title{On the KPZ scaling and the KPZ fixed point for TASEP}
\author{Yuta Arai \thanks{Platform for Arts and Science, Chiba University of Commerce, Ichikawa-shi 263-8522, Japan. Email: yutaarai@cuc.ac.jp}}
\date{}
\begin{document}

\maketitle

\maketitle

\begin{abstract}
We consider all totally asymmetric simple exclusion processes (TASEPs) whose transition probabilities are given in the Sch$\ddot{\rm u}$tz-type formulas and which jump with homogeneous rates.
We show that the multi-point distribution of particle positions and the coefficient of KPZ scaling are described using the probability generating function of the distribution followed when the rightmost particle jumps.
For all TASEPs satisfying certain assumptions,  We also prove the pointwise convergence of the kernels appearing in the joint distribution of particle positions to those appearing in the KPZ fixed point formula.
Our result generalizes the result of Matetski, Quastel, and Remenik \cite{Quastel}.
\end{abstract}

\section{Introduction}
The KPZ universal class was introduced in \cite{Kardar} to describe the universality of the growth model of an interface.
The totally asymmetric simple exclusion process (TASEP) is one of the most typical interacting stochastic particle systems.
It can be interpreted as a stochastic interface growth model belonging to the KPZ universal class.
Furthermore, the TASEP is known as an important model for studying the KPZ universality because its distribution function can be calculated for some quantities.

Research on the KPZ universality of TASEP has been actively conducted since around 2000.
First, in the case of the step initial condition, by considering the relationship stochastically growing Young diagram and TASEP, Johansson  \cite{KJohansson} has derived the one-point limit distribution of the particle current by using the RSK correspondence.
In this case, the limiting distribution turned out to be the GUE Tracy-Widom distribution from random matrix theory \cite{TW94}.
As a related work, in the case of the flat polynuclear growth (PNG) model, the one-point limit distribution of the height distribution has been obtained in \cite{Prahofer}.
In addition, for the last passage percolation, similar results have been derived in \cite{BaikR, Baik}.
The results of \cite{BaikR, Baik, Prahofer} include the result of the one-point limit distribution of particle current for the periodic initial condition in the language of TASEP. It turned out that the limiting distribution is the GOE Tracy-Widom distribution from random matrix theory \cite{TW96}.

The above are the results for one-point fluctuations, but many results for multi-point fluctuations have also been given.
For the case corresponding to the step initial condition, the Fredholm determinant formula for the limiting multi-point distribution has been obtained in the PNG model with different settings \cite{Johansson, PrSpAiry2}.
In this case, the limiting process characterized by the multi-point distribution is the $\rm Airy_{2}$ process.
On the other hand, for the periodic initial condition, the Fredholm determinant formula for the limiting multi-point distribution has been derived in the continuous time TASEP \cite{Sasamoto, TSasamoto} by using the result of the transition probability in TASEP \cite{Schutz}.
In this case, the limiting process characterized by the multi-point distribution is called the $\rm Airy_{1}$ process.
The technique in \cite{Sasamoto, TSasamoto} has been applied to various models.
Therefore, the limit distribution of the multi-point distribution has been obtained for the TASEP and PNG models with different settings \cite{BoFe, Borodin, Ferrari}.

The case of generalized initial conditions for particle positions has also been studied.
Matetski, Quastel, and Remenik \cite{Quastel} first extended the method of \cite{Sasamoto, TSasamoto} to get the limit distribution of multi-point distribution in the continuous time TASEP for arbitrary initial conditions:
In \cite{Sasamoto, TSasamoto}, the correlation kernel for the Fredholm determinant was expressed in terms of the biorthogonal functions $\Psi^{n}_k(x)$ and $\Phi^{n}_k(x)$.
However, there was the problem that $\Phi^{n}_k(x)$ does not have an explicit representation while $\Psi^{n}_k(x)$ does.
Therefore, it was not clear how to take the KPZ scaling limit of this kernel.
Matetski, Quastel, and Remenik \cite{Quastel} solved this problem. 
They represent the function $\Phi^{n}_k(x)$ by the hitting probability of the geometric random walk.
From Donsker's invariance principle, the hitting time of the geometric random walk converges to the hitting time of the Brownian motion when the time-space limit is taken, so this representation of $\Phi^{n}_k(x)$ allows us to take the KPZ scaling limit. 
Based on this method, they have derived the limit distribution of multi-point distribution in the continuous time TASEP for arbitrary initial conditions.
The limiting process with this limit distribution of the multi-point distribution is known as the KPZ fixed point.
The KPZ fixed point has also been obtained in the one-sided reflected Brownian motion \cite{Mihai} and two variations of discrete time TASEP with geometric and Bernoulli jumps \cite{Yuta} by using the method of \cite{Quastel}.
There have also been various other interesting progresses on the KPZ fixed point for example in \cite{Corwin, Sarkar, Virag}.

There have been studies to get the distribution of particle positions in the discrete time TASEP using the result of \cite{DW08}: 
Dieker and Warren \cite{DW08} have derived the transitive kernels of the four processes that correspond to the four variants of the discrete time TASEP by using the RSK correspondence.
Matetski and Remenik \cite{Daniel} have given the distribution of particle positions in the four variants of the discrete time TASEP from the above result and the method of \cite{Quastel}.
They have also obtained a formula for the distribution of particle positions that can be applied for example to continuous time TASEP and discrete time TASEP with sequential update.
In \cite{Zygouras}, they have generalized the method of \cite{DW08} so that it can be applied to the discrete time Bernoulli TASEP with particle- and time-inhomogeneous rates.
Combining the above method with the method of \cite{Quastel}, they gave the distribution of particle positions in the discrete time Bernoulli TASEP with particle- and time-inhomogeneous rates.
However, in \cite{Zygouras,Daniel}, the formula for obtaining the KPZ fixed point that can be uniformly applied to TASEP with different settings was not derived.

In this paper, we consider all TASEPs whose transition probabilities are given in the Sch$\ddot{\rm u}$tz-type formulas and which jump with homogeneous rates.
Then we show that the distribution of particle positions can be described by using the probability generating function of the distribution followed when the rightmost particle jumps.
We remark that this method using the probability generating function is quite different from the method of \cite{Zygouras,Daniel}.
We also state that the coefficient of KPZ scaling used to get the KPZ fixed point can be expressed by the probability generating function of the distribution followed when the rightmost particle jumps. 
Furthermore, we show the property of the coefficient of KPZ scaling.
Finally, by generalizing the method of \cite{Quastel}, we prove the pointwise convergence of the kernels appearing in the joint distribution of particle positions to those appearing in the KPZ fixed point formula for all TASEPs that satisfy certain assumptions, for example, the continuous time TASEP, the discrete time Bernoulli TASEP with sequential update, and the discrete time geometric TASEP with parallel update.
It implies that our method can adapt to multiple models, not to only one model.
Note that our method derives the KPZ fixed point even in the case of the continuous time TASEP with jump rate $\beta\in (0,\infty)$ where the KPZ fixed point has not been given (see Example \ref{ex} and Appendix \ref{Appendix}).

The paper is organized as follows:
In Section \ref{sec2}, we state the TASEPs whose transition probabilities are given in the Sch$\ddot{\rm u}$tz-type formulas (see Assumption \ref{as1}). 
We also give our main result: the Fredholm determinant formula for the TASEPs satisfying Assumption \ref{as1} (Theorem \ref{Main}), the property of the coefficient of KPZ scaling (Theorem \ref{main2}), and the KPZ scaling limit in the TASEPs satisfying Assumption \ref{as1} when Assumption \ref{imas}, and Assumption \ref{limcon} hold (Theorem \ref{special}).
In Section \ref{sec3}, for the TASEPs which satisfy Assumption 2.1, we show Theorem \ref{Main} after the transition probabilities are represented by the probability generating function of the distribution followed when the rightmost particle jumps.
In Section \ref{sec4}, we prove Theorem \ref{main2}.
In Section \ref{sec5}, we give proofs of Theorem \ref{special} and Proposition \ref{scaling}.
In Appendix \ref{Appendix}, we use our method to show that the KPZ fixed point is obtained in the continuous time TASEP with jump rate $\beta\in(0, \infty)$.
The key to our proofs is the saddle point analysis for the kernels by using the probability generating function of the distribution followed when the rightmost particle jumps. 
\section{Models and results}
\label{sec2}
\subsection{Models}
We consider the TASEPs on $\mathbb{Z}$.
Each particle independently and stochastically jumps to the right only if the target site is empty and cannot move if the target site is occupied by other particles.
The above represents the exclusion rule.

We mainly focus on the position of each particle.
We put $t\in\mathbb{Z}$ or $t\in\mathbb{R}$ according to the version.
Then we define  $X_t(i)\in\mathbb{Z}$ as a position of the $i$th particle at time $t$.
The dynamics of the TASEPs preserve the order of the particles, that is, 
\begin{equation*}
\cdots<X_{t}(i+2)<X_{t}(i+1)<X_{t}(i)<X_{t}(i-1)<X_{t}(i-2)<\cdots
\end{equation*}
where the particles at $\pm\infty$ are playing no role in the dynamics when adding $\pm\infty$ into the state space.

 Now we set
\begin{equation*}
\Omega_N=\{\vec{x}=(x_N,x_{N-1},\cdots,x_1)\in\Z^N: x_N<\dots<x_2<x_1\}
\end{equation*}
as the Weyl chamber, whose elements express the particle positions of the TASEPs.
Also, we put 
\begin{equation}
\label{s21}
F_n(x,t)=\frac{(-1)^n}{2\pi i}\oint_{\Gamma_{0,1}}dw\frac{(1-w)^{-n}}{w^{x-n+1}}\mathcal{M}(t, w)
\end{equation}
where $\Gamma_{0,1}$ is any simple loop oriented anticlockwise which includes $w=0$ and $w=1$, and $\mathcal{M}(t, w)$ is analytic on $\{w\in\mathbb{C}: |w|<R\}$ with the radius $R\geq1$.

In this paper, we deal with the TASEP which satisfies the following assumption, where we do not consider the TASEP whose jump rate or jump probability changes depending on time. \begin{assumption}
\label{as1}
The transition probability from $\vec{y}\in\Omega_N$ to $\vec{x}\in\Omega_N$ is given by 
\begin{equation}
\label{s22}
\mathbb{P}(X_t=\vec{x}|X_0=\vec{y})=\det[F_{i-j}(x_{N+1-i}-y_{N+1-j}, t)]_{1\leq i,j\leq N},
\end{equation}
where $X_t=(X_t(1), X_t(2),\dots, X_t(N))$ are the locations of a system of particles. 
\end{assumption}
Assumption \ref{as1} is fulfilled in many TASEPs illustrated in the following example.
\begin{ex}
\label{ex}
{\rm Here we introduce three typical examples.}
\begin{itemize}
\item{\rm The continuous time TASEP with jump rate $\beta$}\\
The continuous time TASEP was introduced in {\rm \cite{Spitzer}} as a mathematical model.
The process $X_t$, $t\in\mathbb{R}_{\geq 0}$ evolves as follows: 
each particle independently attempts to jump to the right neighboring site at rate $\beta\in (0, \infty)$ provided this site is empty. 
The continuous time TASEP is a Markov process with the generator $L$ defined  as follows:
We put $\eta=\{\eta(x): x\in\mathbb{Z}\}\in\{0,1\}^{\mathbb{Z}}$ as  a particle configuration where $\eta(x)=1$ means the site $x$ is occupied by a particle while $\eta(x)=0$ means it is empty.
Then the generator $L$ acting on cylinder functions $f:\{0,1\}^{\mathbb{Z}}\rightarrow\mathbb{R}$ is introduced by
\begin{equation*}
(Lf)(\eta)=\beta\sum_{x\in\mathbb{Z}}\eta(x)(1-\eta(x+1))(f(\eta^{x,x+1})-f(\eta))
\end{equation*}
where
\begin{equation*}
\eta(x)=
\begin{cases}
1,&\text{if the site is occupied by a particle,}\\
0,&\text{if the site $x$ is empty,}
\end{cases}
\end{equation*}
and
$\eta^{x,x+1}$ is the configuration $\eta$ with the occupations at site $x$ and $x+1$ have been interchanged, that is,
\begin{equation*}
\eta^{x,x+1}(y)=
\begin{cases}
\eta(x+1)& \text{for~} y=x,\\
\eta(x)& \text{for~} y=x+1,\\
\eta(y)& \text{otherwise}.
\end{cases}
\end{equation*}
The transition probability of $X_t$ is given by {\rm \cite{Schutz}} using Bethe ansatz:
\begin{equation*}
\mathbb{P}(X_t=\vec{x}|X_0=\vec{y})=\det[F_{i-j}(x_{N+1-i}-y_{N+1-j}, t)]_{1\leq i,j\leq N},
\end{equation*}
with
\begin{equation*}
F_n(x,t)=\frac{(-1)^n}{2\pi i}\oint_{\Gamma_{0,1}}dw\frac{(1-w)^{-n}}{w^{x-n+1}}e^{\beta t(w-1)}
\end{equation*}
where $\Gamma_{0,1}$ is any simple loop oriented anticlockwise which includes $w=0$ and $w=1$.
It is clear that this model satisfies Assumption \ref{as1} with the function
\begin{equation}
\label{beta232}
\mathcal{M}(t, w)=e^{\beta t(w-1)}.
\end{equation}
Note that when $\beta\in (0,1)$, this model can be interpreted as the continuous time version of the discrete time Bernoulli TASEP introduced next.
The KPZ fixed point has been derived in {\rm \cite{Quastel}} when $\beta=1$, but our results show that the KPZ fixed point can also be obtained when $\beta\in (0, \infty)$ (see Appendix \ref{Appendix}).
\item{\rm The discrete time Bernoulli TASEP with sequential update}\\
The discrete time Bernoulli TASEP with sequential update on $\mathbb{Z}$ was studied previously in {\rm \cite{aniso}} as a marginal of dynamics on Gelfand-Tsetlin patterns which preserve the class of Schur processes.
The evolution of the process $X_t$, $t\in\mathbb{Z}_{\geq 0}$  is given by the recursion relation
\begin{equation*}
X_{t+1}(1)=X_{t}(1)+w_{t+1, 1}
\end{equation*}
and
\begin{equation*}
X_{t+1}(i)=\min\left\{X_{t}(i)+w_{t+1, i}, X_{t+1}(i-1)-1\right\},~~~~ i=2, 3, \dots, N
\end{equation*}
where $w_{t, i}$ are independent random variables following the Bernoulli distribution with parameter $p\in(0, 1)$.
The transition probability of this model is given by {\rm \cite{Rakos}}:
\begin{equation*}
\mathbb{P}(X_t=\vec{x}|X_0=\vec{y})=\det[F_{i-j}(x_{N+1-i}-y_{N+1-j}, t)]_{1\leq i,j\leq N},
\end{equation*}
with
\begin{equation*}
F_n(x,t)=\frac{(-1)^n}{2\pi i}\oint_{\Gamma_{0,1}}dw\frac{(1-w)^{-n}}{w^{x-n+1}}(1+p(w-1))^t
\end{equation*}
where $\Gamma_{0,1}$ is any simple loop oriented anticlockwise which includes $w=0$ and $w=1$.
One can readily see that this model satisfies Assumption \ref{as1} with the function
\begin{equation*}
\mathcal{M}(t, w)=(1+p(w-1))^t.
\end{equation*}
\item{\rm The discrete time geometric TASEP with parallel update}\\
The discrete time geometric TASEP with parallel update on $\mathbb{Z}$ was studied previously in {\rm \cite{Jon}} as a marginal of dynamics on Gelfand-Tsetlin patterns which preserve the class of Schur processes.
The evolution of the process $X_t$, $t\in\mathbb{Z}_{\geq 0}$  is given by the recursion relation
\begin{equation*}
X_{t+1}(1)=X_{t}(1)+\hat{w}_{t+1, 1}
\end{equation*}
and
\begin{equation*}
X_{t+1}(i)=\min\left\{X_{t}(i)+\hat{w}_{t+1, i}, X_{t+1}(i-1)-1\right\},~~~~ i=2, 3, \dots, N
\end{equation*}
where $\hat{w}_{t, i}$ are independent random variables following the Geometric distribution with parameter $\alpha\in(0, 1)$.
The transition probability of this process is given by {\rm \cite{Yuta, DW08, Daniel}}:
\begin{equation}
\label{geogeo1}
\mathbb{P}(X_t=\vec{x}|X_0=\vec{y})=\det[F_{i-j}(x_{N+1-i}-y_{N+1-j}, t)]_{1\leq i,j\leq N},
\end{equation}
with
\begin{equation}
\label{geogeo2}
F_n(x,t)=\frac{(-1)^n}{2\pi i}\oint_{\Gamma_{0,1}}dw\frac{(1-w)^{-n}}{w^{x-n+1}}\left(\frac{1-\alpha}{1-\alpha w}\right)^t
\end{equation}
where $\Gamma_{0,1}$ is any simple loop oriented anticlockwise which includes $w=0$ and $w=1$.
Note that it was first shown in {\rm \cite{DW08}} that the transition probabilities are given by determinants:
Dieker and Warren {\rm \cite{DW08}} have represented the transition probabilities by using certain sums involving symmetric polynomials.  
On the other hand, the expression of the transition probability by contour integral formulas like \eqref{geogeo2} has first been given in {\rm \cite{Yuta}}.
Besides, it was shown in {\rm \cite{Daniel}} that the expression of transition probability in {\rm \cite{Yuta}} and the expression of transition probability in {\rm \cite{DW08}} are equivalent.
It is easy to see that this model satisfies Assumption \ref{as1} with the function
\begin{equation*}
\mathcal{M}(t, w)=\left(\frac{1-\alpha}{1-\alpha w}\right)^t.
\end{equation*}
\end{itemize}
\end{ex}
\subsection{Results}
In this subsection, we state our main results.
\subsubsection{The representation of the distribution of the particle positions}
Now we give a single Fredholm determinant formula for the joint distribution
of the particle position in TASEP satisfies Assumption \ref{as1}.
For describing our results, we state some definitions.
\begin{definition}[epigraph and hypograph]
\label{epihy}
For a real single-valued function $\hat{f}:\mathbb{A}\to(-\infty, \infty]$
with (in general an uncountable) domain $\mathbb{A}$, we set
\begin{align*}
{\rm epi}(\hat{f})=\{(x,y) : y\geq \hat{f}(x)\},~{\rm hypo} (\hat{f})=\{(x,y) : y\leq \hat{f}(x)\}.
\end{align*}
\end{definition}

\begin{definition}
\label{defRWtau}
We put ${RW}_m,~m=0,1,2\dots$ as the position of a random walker with Geom$[\frac{1}{2}]$ jumps strictly to the left starting at some fixed site $c$, 
that is to say,
\begin{align*}
{RW}_m=c-\chi_1-\chi_2-\cdots -\chi_m,
\end{align*}
where $\chi_i,~i=1,2,\dots$ are the i.i.d. random variable with
$
\P(\chi_i=k)=1/2^{k+1},~k=0,1,2,\dots.
$

We also set the stopping time
\begin{equation}
\label{Tau}
\tau=\min\{m\geq 0 : RW_m>X_0(m+1)\}
\end{equation}
where $\tau$ is the hitting time of the strict epigraph of the curve $(X_0(k+1))_{k=0,\dots, n-1}$ by the random walk $RW_k$, 
$X_0(m)$ is constant and defined only $m\le N$ when the number of particles is $N$.
\end{definition}

At last we set the multiplication operators.
\begin{definition}
\label{chi}
For a fixed vector $a\in\mathbb{R}^m$ and indices $n_1<\cdots<n_m$, we define
\begin{align*}
\chi_{a}(n_j,x)=\1_{x>a_j},~~\bar{\chi}_{a}(n_j,x)=\1_{x\leq a_j}.
\end{align*}
 as the multiplication operators acting on the space $\ell^2(\{n_1,\dots,n_m\}\times\mathbb{Z})$(or acting on the space $L^2(\{x_1,\dots,x_m\}\times\mathbb{R})$).
 \end{definition}
When considering the distribution of particle positions, we assume that the rightmost particle exists and is labeled $1$.
Now we remark that the following:
The rightmost particle of TASEP $X_t(1)$ is a (right) one-sided jump random walk or a compound Poisson process because the exclusion rule does not work.
Therefore 
\begin{itemize}
\item If $t\in\mathbb{Z}_{\geq 0}$, then
\begin{equation}
\label{m11}
X_t(1):=Y_1+Y_2+\dots+Y_t
\end{equation}
where $Y_1, Y_2, \dots, Y_t$ are independent and identically distributed non-negative integer-valued random variables.
\item If $t\in\mathbb{R}_{\geq 0}$, then
\begin{equation}
\label{m12}
X_t(1)=S_{N_t}:=Z_1+Z_2+\dots+Z_{N_t}
\end{equation}
where $Z_1, Z_2, \dots$ are independent and identically distributed non-negative integer-valued random variables, $N_t$ is Poisson process with parameter $\lambda\in(0, \infty)$, independent of the process $S_n$, $n\in\mathbb{Z}_{\geq 0}$.
\end{itemize}

Noting that \eqref{m11} and \eqref{m12}, we have the following result.
\begin{proposition}
\label{m13}
We consider the TASEP that satisfies Assumption \ref{as1}.
Then
\begin{equation}
\mathcal{M}(t, w)=\mathcal{M}(w)^t
\end{equation}
where 
\begin{equation}
\label{pgf}
\mathcal{M}(w)=
\begin{cases}
G_{Y_1}(w) & \text{if $t\in\mathbb{Z}_{\geq 0}$,}\\
G_{X}(G_{Z_1}(w)) & \text{if $t\in\mathbb{R}_{\geq 0}$,}
\end{cases}
\end{equation}
$G_{Z}(w)$ is a probability generating function of the non-negative integer-valued random variable $Z$, that is,
\begin{equation}
\label{jyuuyou5}
G_{Z}(w)=\sum_{k=0}^{\infty}w^k\mathbb{P}(Z=k),
\end{equation}
$Y_1$ is defined in \eqref{m11},  $Z_1$ is defined in \eqref{m12} and $X$ is Poisson random variable with parameter $\lambda\in(0, \infty)$.
\end{proposition}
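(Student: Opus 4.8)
The plan is to recover $\mathcal{M}(t,\cdot)$ as a probability generating function by examining the motion of the rightmost particle alone, for which the exclusion rule plays no role, and then to read off its $t$-dependence from \eqref{m11} and \eqref{m12}.

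First I would specialise Assumption \ref{as1} to $N=1$: the $1\times 1$ case of \eqref{s22} reads $\mathbb{P}(X_t(1)=x\mid X_0(1)=y)=F_0(x-y,t)$. Next I would evaluate $F_0(k,t)$ from \eqref{s21} with $n=0$. Since $\mathcal{M}(t,\cdot)$ is holomorphic on $\{|w|<R\}$ with $R\ge 1$ and the factor $(1-w)^{-n}$ is absent when $n=0$, the integrand $\mathcal{M}(t,w)\,w^{-(k+1)}$ has its only singularity inside $\Gamma_{0,1}$ at $w=0$; contracting $\Gamma_{0,1}$ to a small circle about the origin and applying the residue theorem gives $F_0(k,t)=[w^k]\,\mathcal{M}(t,w)$ for $k\ge 0$ and $F_0(k,t)=0$ for $k<0$, where $[w^k]$ denotes the $k$-th Taylor coefficient at $w=0$. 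Hence, as an identity of power series convergent at least for $|w|<1$,
\begin{equation*}
\mathcal{M}(t,w)=\sum_{k\ge 0}F_0(k,t)\,w^k=\sum_{k\ge 0}\mathbb{P}\big(X_t(1)-X_0(1)=k\big)\,w^k=G_{X_t(1)-X_0(1)}(w),
\end{equation*}
so that $\mathcal{M}(t,\cdot)$ is exactly the probability generating function of the displacement of the rightmost particle up to time $t$.

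It then remains to compute this generating function in the two regimes. If $t\in\mathbb{Z}_{\ge 0}$, then by \eqref{m11} and the independence of the $Y_i$ one gets $\mathcal{M}(t,w)=\mathbb{E}\big[w^{Y_1+\cdots+Y_t}\big]=G_{Y_1}(w)^t$, so $\mathcal{M}(w)=G_{Y_1}(w)$ and $\mathcal{M}(t,w)=\mathcal{M}(w)^t$. If $t\in\mathbb{R}_{\ge 0}$, then conditioning on $N_t$ in \eqref{m12} gives
\begin{equation*}
\mathcal{M}(t,w)=\mathbb{E}\Big[\mathbb{E}\big[w^{Z_1+\cdots+Z_{N_t}}\mid N_t\big]\Big]=\mathbb{E}\big[G_{Z_1}(w)^{N_t}\big]=G_{N_t}\big(G_{Z_1}(w)\big);
\end{equation*}
since $N_t$ is Poisson with parameter $\lambda t$ we have $G_{N_t}(s)=e^{\lambda t(s-1)}=\big(e^{\lambda(s-1)}\big)^t=G_X(s)^t$ with $X\sim\mathrm{Poisson}(\lambda)$, whence $\mathcal{M}(t,w)=G_X\big(G_{Z_1}(w)\big)^t$, i.e.\ $\mathcal{M}(w)=G_X\big(G_{Z_1}(w)\big)$, the $t$-th power being the unambiguous one supplied by the exponential. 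In both cases this is precisely \eqref{pgf}.

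The argument is essentially bookkeeping once the right object is isolated; the only delicate step will be the contour manipulation identifying $F_0(k,t)$ with the Taylor coefficient of $\mathcal{M}(t,\cdot)$, where I must use analyticity of $\mathcal{M}(t,\cdot)$ across $w=1$ to contract $\Gamma_{0,1}$ to the origin and the hypothesis $R\ge 1$ to ensure the resulting series is a genuine probability generating function on $|w|\le 1$.
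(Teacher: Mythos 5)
Your proposal is correct and follows essentially the same route as the paper: reduce to $N=1$ where the exclusion rule is inert, identify $\mathcal{M}(t,\cdot)$ as the probability generating function of the rightmost particle's displacement via the $n=0$ contour integral, and then read off the $t$-th power structure from the i.i.d.\ sum \eqref{m11} resp.\ the compound Poisson representation \eqref{m12}. The only difference is cosmetic: where you extract $F_0(k,t)$ as the $k$-th Taylor coefficient of $\mathcal{M}(t,\cdot)$ by the residue theorem, the paper proves the same identification as a two-way equivalence (its Lemma 3.3) by parametrizing a circle of radius $r<R$ and integrating term by term.
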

This proof is given in Section \ref{s31}.

Furthermore, we get the following by using Proposition \ref{m13}.
\begin{theorem}
\label{s88}
We consider the TASEP that satisfies Assumption \ref{as1}.
Then the transition probability of TASEP is given as the following:
\begin{equation*}
\mathbb{P}(X_t=\vec{x}|X_0=\vec{y})=\det[\overline{F}_{i-j}(x_{N+1-i}-y_{N+1-j}, t)]_{1\leq i,j\leq N}
\end{equation*}
where $\vec{x},\vec{y}\in\Omega_N$, 
\begin{equation}
\label{s81}
\overline{F}_n(x,t)=\frac{(-1)^n}{2\pi i}\oint_{\Gamma_{0,1}}dw\frac{(1-w)^{-n}}{w^{x-n+1}}\mathcal{M}(w)^t,
\end{equation}
where $\Gamma_{0,1}$ is any simple loop oriented anticlockwise which includes $w=0$ and $w=1$ and $\mathcal{M}(w)$ is defined in \eqref{pgf}.
\end{theorem}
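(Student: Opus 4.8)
The plan is to read Theorem~\ref{s88} off from Assumption~\ref{as1} once the factorization of Proposition~\ref{m13} is available. First I would observe that Assumption~\ref{as1} already puts the transition probability in the form $\det[F_{i-j}(x_{N+1-i}-y_{N+1-j},t)]_{1\le i,j\le N}$ with $F_n$ exactly as in \eqref{s21}; the determinantal structure, the contour $\Gamma_{0,1}$, the factors $(1-w)^{-n}$ and $w^{-(x-n+1)}$, and the indexing are all identical to what is claimed, so the entire content of the theorem is the replacement of the abstract analytic function $\mathcal{M}(t,w)$ by the concrete $\mathcal{M}(w)^t$ with $\mathcal{M}(w)$ as in \eqref{pgf}.

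Second, I would invoke Proposition~\ref{m13}, which is precisely that identity: $\mathcal{M}(t,w)=\mathcal{M}(w)^t$ for every TASEP satisfying Assumption~\ref{as1}. For orientation I would recall in a line why this holds: specializing \eqref{s22} to $N=1$ gives $\mathbb{P}(X_t(1)=x\,|\,X_0(1)=y)=F_0(x-y,t)$, and since $\mathcal{M}(t,w)$ is analytic the contour integral \eqref{s21} with $n=0$ extracts the coefficient of $w^{x-y}$, so $\mathcal{M}(t,w)=\sum_{k\ge0}F_0(k,t)\,w^k$ is the probability generating function of the displacement $X_t(1)-X_0(1)$ of the rightmost particle. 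That particle feels no exclusion, hence by \eqref{m11} and \eqref{m12} its displacement is a sum of i.i.d.\ increments; the generating function of such a sum factors as the one-step generating function to the power $t$, giving $G_{Y_1}(w)^t$ in discrete time and $\mathbb{E}[G_{Z_1}(w)^{N_t}]=e^{\lambda t(G_{Z_1}(w)-1)}=G_X(G_{Z_1}(w))^t$ in continuous time, which is \eqref{pgf}.

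Third, I would substitute $\mathcal{M}(t,w)=\mathcal{M}(w)^t$ into \eqref{s21}: the resulting integral is exactly the integral defining $\overline{F}_n$ in \eqref{s81}, so $F_n(x,t)=\overline{F}_n(x,t)$ for every integer $n$, and feeding this equality into the determinant of Assumption~\ref{as1} yields the stated formula. I would add a short remark that $\overline{F}_n(x,t)$ is well defined: $\mathcal{M}(w)$ is a composition of probability generating functions, hence analytic on a disc of radius $\ge 1$ (in continuous time $\mathcal{M}(w)=\exp(\lambda(G_{Z_1}(w)-1))$ is moreover nonvanishing, so $\mathcal{M}(w)^t=\exp(\lambda t(G_{Z_1}(w)-1))$ is unambiguous for real $t$), so $\Gamma_{0,1}$ may be chosen inside the region of analyticity, where the only singularities of the integrand are at $w=0$ and $w=1$. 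Granting Proposition~\ref{m13}, there is no real obstacle here; the substantive step — identifying $\mathcal{M}(t,w)$ with the rightmost-particle generating function and factoring it through the exclusion-free dynamics of particle $1$ — has already been carried out in Proposition~\ref{m13}.
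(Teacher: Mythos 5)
Your proposal is correct and matches the paper's own proof: both simply invoke Proposition \ref{m13} to replace $\mathcal{M}(t,w)$ by $\mathcal{M}(w)^t$ in \eqref{s21}, conclude $F_n(x,t)=\overline{F}_n(x,t)$, and substitute into the determinant of Assumption \ref{as1}. The extra remarks you add on well-definedness and on the provenance of Proposition \ref{m13} are harmless but not needed.
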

Proof is given in Section \ref{s31}.

From Theorem \ref{s88} and Theorem 1.2 of \cite{Daniel}, we obtain the following results.
The following results represent that the distribution of particle positions can be expressed by the probability generating function of the distribution followed when the rightmost particle jumps.
\begin{theorem}
\label{Main}
We consider the TASEP which satisfies Assumption \ref{as1}. 
Let $t\in\mathbb{Z}$ or $t\in\mathbb{R}$. 
Also, we put $X_t(j),~j\in\Z$ as a position of the $j$th particle at time $t$.
Assume that the initial positions $X_0(j)\in\Z$ for $j=1,2,\dots$ are arbitrary constants satisfying $X_0(1)>X_0(2)>\cdots$
while $X_0(j)=\infty$ for $j\leq 0$.

For $n_j\in\Z_{\ge 1}~j=1,2,\dots,M$ with $1\leq n_1<n_2<\cdots<n_M$, and ${a}=(a_1,a_2, \dots, a_M)\in\Z^M$, we get
\begin{equation}
\label{pro}
\mathbb{P}(X_t(n_j)>a_j, j=1,\dots,M)=\det(I-\bar{\chi}_{a}K_t\bar{\chi}_{a})_{\ell^2(\{n_1,\dots,n_M\}\times\mathbb{Z})}.
\end{equation}
Here $\bar{\chi}_{\bm a}(n_j,x)$ is introduced in Definition \ref{chi} and 
\begin{align}
\label{Kt}
&K_t(n_i, x  ; n_j, y)=-Q^{n_j-n_i}(x,y)\1_{n_i<n_j}+(\mathfrak{S}_{-t, -n_i})^{*}\bar{\mathfrak{S}}^{{\rm epi} (X_0)}_{-t,n_j}(x,y),
\\
&Q^m(x,y)=\frac{1}{2^{x-y}}\binom{x-y-1}{m-1}\1_{x\geq y+m},
\\
\label{stn}
&\mathfrak{S}_{-t,-n}(z_1,z_2)=\frac{1}{2\pi i}\oint_{\Gamma_0}dw\frac{(1-w)^n}{2^{z_2-z_1}w^{n+1+z_2-z_1}}\left\{\frac{\mathcal{M}(w)}{\mathcal{M}\left(\frac{1}{2}\right)}\right\}^{t},
\\
\label{bstn}
&\bar{\mathfrak{S}}_{-t,n}(z_1,z_2)=\frac{1}{2\pi i}\oint_{\Gamma_0}dw\frac{(1-w)^{z_2-z_1+n-1}}{2^{z_1-z_2}w^{n}}\left\{\frac{\mathcal{M}(1-w)}{\mathcal{M}\left(\frac{1}{2}\right)}\right\}^{-t},
\\
\label{sepi}
&\displaystyle\bar{\mathfrak{S}}^{\rm epi(X_0)}_{-t,n}(z_1, z_2)=\mathbb{E}_{RW_0=z_1}\left[\bar{\mathfrak{S}}_{-t, n-\tau}(RW_{\tau}, z_2)\1_{\tau<n}\right],
\end{align}
where
$\Gamma_0$ is a simple counterclockwise loop around $0$ not enclosing any other poles and $\mathcal{M}(w)$ is defined by \eqref{pgf}.
The superscript $\rm epi(X_0)$ in~\eqref{sepi} refers to the fact that $\tau$ is the hitting time of the strict epigraph of the curve $(X_0(k+1))_{k=0,\dots, n-1}$ by the random walk $RW_k$ defined in Definition \ref{defRWtau}.
\end{theorem}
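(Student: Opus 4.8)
The plan is to derive Theorem \ref{Main} by reducing it to the already-available result of Matetski and Remenik (Theorem 1.2 of \cite{Daniel}) via the rewriting of the one-particle kernel $F_n$ supplied by Theorem \ref{s88}. First I would observe that Theorem \ref{s88} tells us that, for any TASEP satisfying Assumption \ref{as1}, the transition probability has exactly the determinantal form to which the machinery of \cite{Daniel} applies, with the single scalar function $\mathcal{M}(t,w)$ replaced by $\mathcal{M}(w)^t$, where $\mathcal{M}(w)$ is the probability generating function described in Proposition \ref{m13}. In other words, the model-dependence has been compressed into the single analytic factor $\mathcal{M}(w)^t$, and the combinatorial/algebraic backbone (the biorthogonalization, the random-walk hitting representation of $\Phi^n_k$, the passage to the Fredholm determinant over $\ell^2(\{n_1,\dots,n_M\}\times\mathbb{Z})$) is identical to the continuous-time case treated in \cite{Sasamoto,TSasamoto,Quastel,Daniel}.

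Concretely, I would proceed in the following steps. (1) Recall from \cite{Daniel} the explicit form of the correlation kernel for a TASEP with transition kernel $\det[F_{i-j}(\cdot)]$, i.e.\ the kernel built from the biorthogonal functions $\Psi^n_k$, $\Phi^n_k$ and its $\mathfrak{S}$-representation. (2) Specialize every ingredient to $F_n = \overline{F}_n$ of \eqref{s81}, so that all the contour integrals carry the weight $\mathcal{M}(w)^t$ (respectively $\mathcal{M}(1-w)^{-t}$ after the change of variable $w \mapsto 1-w$ used to produce $\bar{\mathfrak{S}}$). (3) Track how the normalization by the value of the weight at the saddle $w=\tfrac12$ enters: in \cite{Daniel} the factor $2^{\pm(z_2-z_1)}$ together with a power of the weight at the special point appears precisely because the conjugation of the kernel by the geometric random-walk transition matrix $Q$ absorbs $\mathcal{M}(\tfrac12)^t$; this yields the ratios $\{\mathcal{M}(w)/\mathcal{M}(\tfrac12)\}^t$ and $\{\mathcal{M}(1-w)/\mathcal{M}(\tfrac12)\}^{-t}$ appearing in \eqref{stn}--\eqref{bstn}. (4) Identify the random-walk hitting operator $\bar{\mathfrak{S}}^{\mathrm{epi}(X_0)}_{-t,n}$ of \eqref{sepi} with the corresponding object in \cite{Daniel}, noting that the random walk $RW_m$ and the stopping time $\tau$ of Definition \ref{defRWtau} are model-independent — they come only from the $Q$-matrix, not from $\mathcal{M}$ — so this part transfers verbatim. (5) Assemble \eqref{Kt} and conclude \eqref{pro}.

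The only nontrivial point is a convergence/analyticity check: Theorem 1.2 of \cite{Daniel} is stated for specific choices of $\mathcal{M}$ (the four Dieker--Warren kernels, continuous-time TASEP), and to invoke it for an arbitrary $\mathcal{M}$ arising as a probability generating function one must verify that the contour manipulations — deforming $\Gamma_{0,1}$ to $\Gamma_0$, interchanging sums and integrals in the biorthogonalization, and the interchange of the expectation $\mathbb{E}_{RW_0=z_1}[\cdots]$ with the contour integral in \eqref{sepi} — remain valid. This reduces to the facts that $\mathcal{M}(w)$ is analytic on $\{|w|<R\}$ with $R\ge 1$ (given in \eqref{s21}, and inherited by $\mathcal{M}(w)^t$), that $|\mathcal{M}(w)|\le 1$ on the relevant arcs since $\mathcal{M}$ is a generating function of a probability distribution evaluated at $|w|\le 1$, and that $\mathcal{M}(\tfrac12)\ne 0$, so that the normalizing ratios are well defined. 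I expect this uniform-integrability/analyticity bookkeeping to be the main obstacle; once it is in place, the identity \eqref{pro} is a direct transcription of \cite[Theorem 1.2]{Daniel} with $F_n \to \overline{F}_n$, and the proof is complete. The detailed verification is carried out in Section \ref{s31}.
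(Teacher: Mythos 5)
Your proposal takes essentially the same route as the paper: verify that Theorem \ref{s88} puts the model in the form covered by Theorem 1.2 of \cite{Daniel}, transcribe that result with the weight $\mathcal{M}(w)^t$, and observe that the normalizing factors $\mathcal{M}(\tfrac{1}{2})^{\mp t}$ relating $\mathfrak{S}_{-t,-n},\bar{\mathfrak{S}}_{-t,n}$ to the unnormalized kernels cancel in the composition $(\mathfrak{S}_{-t,-n_i})^{*}\bar{\mathfrak{S}}^{{\rm epi}(X_0)}_{-t,n_j}$, leaving $K_t$ unchanged. This is exactly the argument given in Section \ref{s32}.
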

This proof is given in Section \ref{s32}.
\begin{remark}
Theorem \ref{s88} implies that applying Proposition \ref{m13} to Assumption \ref{as1} leads to Assumption 1.1 of \cite{Daniel}.
Therefore, in Theorem \ref{Main}, the distribution of particle positions has been obtained under Assumption \ref{as1} which is a weaker assumption than Assumption 1.1 in \cite{Daniel}.
Moreover, Theorem \ref{Main} shows that the function $\mathcal{M}(w)$, which was not obtained in the explicit form in Theorem 1.2 of \cite{Daniel}, is given by the probability generating function of the distribution followed when the rightmost particle jumps.
\end{remark}
\subsubsection{The coefficient of KPZ scaling and the KPZ scaling limit }
Now we introduce our result on the scaling limit of the joint distribution function in Theorem~\ref{Main}. 
Our main results include the result of continuous-time TASEP in \cite{Quastel} and the results of discrete-time TASEP in \cite{Yuta} and can adapt to multiple models, not to only one model.

To see the universal behavior of the fluctuations, we focus on
the height function.
\begin{definition}
\label{relation1}
For $z\in\mathbb{Z}$, we give the TASEP height function related to $X_t$ by
\begin{equation}
\label{height}
\displaystyle h_t(z)=-2(X^{-1}_t(z-1)-X^{-1}_0(-1))-z
\end{equation}
where 
\begin{equation}
X^{-1}_t(u)=\min\{k\in\mathbb{Z}:X_t(k)\leq u\}
\end{equation}
is the label of the rightmost particle which sits to the left of, or at, $u$ at time $t$
and we fix $h_0(0)=0$.
\end{definition}

Note that we can represent it by
\begin{equation}
h_t(z+1)=h_t(z)+\hat{\eta}_t(z)
\end{equation}
where
\begin{equation*}
\hat{\eta}_t(z)
=\begin{cases}
1&\text{if there is a particle at $z$ at time $t$,}\\
-1&\text{if there is no particle at $z$ at time $t$.}
\end{cases}
\end{equation*}
We can also extend the height function to a continuous function of $x\in\mathbb{R}$ by linearly interpolating between the integer points.

Because the TASEP is known to belong to the Kardar-Parisi-Zhang (KPZ) universality class, we expect that the proper scaling of the height function is
\begin{align}
\frac{h_t(x)-At}{Ct^{\frac13}}, \text{~with~} x=Bt^{2/3}.
\label{kpzscaling1}
\end{align}
The above means that the height average of the TASEP grows as $t^1$ with speed $A$ which is a constant.
In addition, the fluctuation of the height around the average is order $t^{1/3}$ against the $t^{1/2}$ of the scaling in the central limit theorem. 
The scaling exponent of the $x$-direction is $2/3$ which is twice of the $h$-direction $1/3$.
This suggests that the path of the height function becomes the Brownian motion like. 
It is well known that the exponents $(1/3, 2/3)$ are universal and characterize the KPZ universality class.
In previous studies, the constants $A$, $B$, and $C$ are obtained only for each model.

In this paper, we introduce that the constants $A$, $B$, and $C$ can be written without depending on the model when a specific assumption is imposed.
First, we put 
\begin{equation}
\label{gamgam}
\gamma(w)=\frac{\mathcal{M}\left({\textstyle\frac{1}{2}(1-w)}\right)}{\mathcal{M}\left({\textstyle\frac{1}{2}}\right)}
\end{equation}
where $\mathcal{M}(w)$ is defined in \eqref{pgf}.
Next, we introduce the following assumption.
\begin{assumption}
\label{imas}
\begin{equation}
\label{ftusi}
\gamma^{(3)}(0)-3\gamma^{(2)}(0)\gamma^{'}(0)+2\{\gamma^{'}(0)\}^3-2\gamma^{'}(0)>0
\end{equation}
where $\gamma^{(n)}(w)$ is the $n$-th derivative of $\gamma(w)$.
\end{assumption}
\begin{remark}
Several well-known TASEP models meet the above assumption.
For example, it is easy to see that the continuous time TASEP, the discrete time Bernoulli TASEP with sequential update, and the discrete time geometric TASEP with parallel update satisfy Assumption \ref{imas}.
However, we can give the probability generating function $\mathcal{M}(w)$ \eqref{pgf} that does not satisfy Assumption \ref{imas}.
For example, we consider the case where the update rule is given as follows:
\begin{equation*}
\mathbb{P}(X_{t+1}(1)=a_1+b|X_{t}(1)=a_1)\\
=
\begin{cases}
p &\text{for $b=4$,}
\\
1-p & \text{for $b=0$,}
\\
0 & \text{otherwise,}
\end{cases}
\end{equation*}
where $a_1\in\mathbb{Z}$ and $0<p<1$.
Then 
\begin{equation*}
\mathcal{M}(w)=1-p+pw^4
\end{equation*}
and
\begin{equation*}
\gamma^{(3)}(0)-3\gamma^{(2)}(0)\gamma^{'}(0)+2\{\gamma^{'}(0)\}^3-2\gamma^{'}(0)=\frac{p(1-p)(23p-16)}{16\left\{\mathcal{M}(\frac{1}{2})\right\}^3}.
\end{equation*}
Therefore, when $0<p<\frac{16}{23}$, 
\begin{equation*}
\gamma^{(3)}(0)-3\gamma^{(2)}(0)\gamma^{'}(0)+2\{\gamma^{'}(0)\}^3-2\gamma^{'}(0)<0.
\end{equation*}
From the above, we see that Assumption \ref{imas} is necessary.
\end{remark}
Supposing that Assumption \ref{imas}, the constants $A$, $B$, and $C$ are given as follows:
\begin{align}
A= \frac{2\{\gamma^{'}(0)+\{\gamma^{'}(0)\}^2-\gamma^{(2)}(0)\}}{\gamma^{(3)}(0)-3\gamma^{(2)}(0)\gamma^{'}(0)+2\{\gamma^{'}(0)\}^3-2\gamma^{'}(0)},B=2, C=1.
\label{kpzscaling2}
\end{align}
By the property of the height function, we put the scaled height, which is equivalent to~\eqref{kpzscaling2}.
This is known as \lq{}\lq{}1:2:3 scaling\rq\rq{} which is defined in~\cite{Quastel}.
\begin{definition}
For $\mathbf{t}\in\R_{\ge 0}$ and $\mathbf{x}\in\R$, we set the scaling height function
\begin{equation}
\label{Hei}
\displaystyle\hat{h}^{\varepsilon}(\mathbf{t},\mathbf{x})=\varepsilon^{\frac{1}{2}}\left[h_{t}(x)+\frac{2\{\gamma^{(2)}(0)-\{\gamma^{'}(0)\}^2-\gamma^{'}(0)\}}{\gamma^{(3)}(0)-3\gamma^{(2)}(0)\gamma^{'}(0)+2\{\gamma^{'}(0)\}^3-2\gamma^{'}(0)}\varepsilon^{-\frac{3}{2}}\mathbf{t}\right],
\end{equation}
where $t$ and $x$ are scaled as
\begin{align}
\label{txscaling}
t=\frac{2}{\gamma^{(3)}(0)-3\gamma^{(2)}(0)\gamma^{'}(0)+2\{\gamma^{'}(0)\}^3-2\gamma^{'}(0)}\varepsilon^{-\frac{3}{2}}\mathbf{t},
~
x=2\varepsilon^{-1}\mathbf{x}.
\end{align}
\end{definition}
\begin{remark}
\eqref{Hei} and \eqref{txscaling} show that the coefficient of $\varepsilon^{-\frac{3}{2}}\mathbf{t}$ can be expressed by the probability generating function of the distribution followed when the rightmost particle jumps.
This implies that the coefficient of $\varepsilon^{-\frac{3}{2}}\mathbf{t}$ is described by the probability generating functions of the particles unaffected by the exclusion rule.
\end{remark}
In \eqref{Hei} and \eqref{txscaling}, we focus on the denominator and numerator of the coefficient of $\varepsilon^{-\frac{3}{2}}\mathbf{t}$.
Then we can give the properties needed to obtain the scaling limit of distribution of height function.
\begin{theorem}
\label{main2}
We have
\begin{equation}
\label{pgfg2}
0\leq\gamma^{(2)}(0)-\{\gamma^{'}(0)\}^2-\gamma^{'}(0)<\infty,
\end{equation}
\begin{equation}
\label{pgfg3}
|\gamma^{(3)}(0)-3\gamma^{(2)}(0)\gamma^{'}(0)+2\{\gamma^{'}(0)\}^3-2\gamma^{'}(0)|<\infty
\end{equation}
where $\gamma(w)$ is introduced in \eqref{gamgam}.
\end{theorem}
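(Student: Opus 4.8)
The plan is to identify $\mathcal{M}$ with a probability generating function and then reduce both assertions to elementary moment estimates for an exponentially tilted version of the corresponding random variable. By Proposition~\ref{m13} (equivalently Theorem~\ref{s88}), $\mathcal{M}(w)=\sum_{k\ge 0}p_k w^k$ is the probability generating function $G_W$ of a non-negative integer-valued random variable $W$, with $p_k=\mathbb{P}(W=k)$: in the discrete-time case $W=Y_1$, and in the continuous-time case $W=Z_1+\cdots+Z_X$ is the compound Poisson sum appearing in~\eqref{m12}. Since $p_k\ge 0$ and $\sum_k p_k=1$, the number $\mathcal{M}(\tfrac12)=\sum_k p_k 2^{-k}$ lies in $(0,1]$, so one may introduce the tilted probabilities $\tilde p_k:=2^{-k}p_k/\mathcal{M}(\tfrac12)$ and let $\tilde W$ have this law.

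Next I would record the identity obtained by differentiating the power series of $\mathcal{M}$ term by term at $w=\tfrac12$ (legitimate because $\tfrac12$ lies inside the disk of analyticity $\{|w|<R\}$, $R\ge 1$): writing $(\tilde W)_n:=\tilde W(\tilde W-1)\cdots(\tilde W-n+1)$ for the falling factorial, one gets $\mathcal{M}^{(n)}(\tfrac12)=2^n\,\mathcal{M}(\tfrac12)\,\mathbb{E}[(\tilde W)_n]$. Combining this with $\gamma(w)=\mathcal{M}(\tfrac{1-w}{2})/\mathcal{M}(\tfrac12)$ and the chain rule yields the clean formula $\gamma^{(n)}(0)=(-1)^n\,\mathbb{E}[(\tilde W)_n]$ for $n=1,2,3$; in particular $\gamma'(0)=-\mathbb{E}[\tilde W]$, $\gamma^{(2)}(0)=\mathbb{E}[\tilde W(\tilde W-1)]$ and $\gamma^{(3)}(0)=-\mathbb{E}[\tilde W(\tilde W-1)(\tilde W-2)]$. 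Substituting these into the two expressions and simplifying with central moments (a routine calculation) gives
\begin{align*}
\gamma^{(2)}(0)-\{\gamma'(0)\}^2-\gamma'(0)&={\rm Var}(\tilde W),\\
\gamma^{(3)}(0)-3\gamma^{(2)}(0)\gamma'(0)+2\{\gamma'(0)\}^3-2\gamma'(0)&=3\,{\rm Var}(\tilde W)-\mathbb{E}\big[(\tilde W-\mathbb{E}\tilde W)^3\big].
\end{align*}
The first right-hand side is a variance, hence non-negative, which gives the left inequality in~\eqref{pgfg2}. (As a consistency check, applying this to $\mathcal{M}(w)=1-p+pw^4$ recovers the value $\frac{p(1-p)(23p-16)}{16\{\mathcal{M}(1/2)\}^3}$ quoted in the remark following Assumption~\ref{imas}.)

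What remains — and this is the only point needing care — is the finiteness of all the quantities involved, i.e.\ that $\tilde W$ has a finite third moment; this may fail for $W$ itself when, for instance, $Z_1$ is heavy-tailed, so the tilting is essential. Here the weighting by $2^{-k}$ does the work: for each fixed $n$ the sequence $k\mapsto k^n 2^{-k}$ is bounded, say by $c_n<\infty$, whence $\mathbb{E}[\tilde W^n]=\mathcal{M}(\tfrac12)^{-1}\sum_k k^n p_k 2^{-k}\le c_n/\mathcal{M}(\tfrac12)<\infty$ because $\mathcal{M}(\tfrac12)>0$. Thus ${\rm Var}(\tilde W)<\infty$ and $\mathbb{E}[(\tilde W-\mathbb{E}\tilde W)^3]$ is finite, which delivers the upper bound in~\eqref{pgfg2} as well as~\eqref{pgfg3}. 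I do not expect any genuine obstacle beyond this finiteness bookkeeping: the content of the theorem is precisely that the two derivative combinations are, up to sign, a variance and a (variance minus third-central-moment) quantity of the exponentially tilted jump law of the rightmost particle.
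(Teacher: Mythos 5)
Your proof is correct, and its core coincides with the paper's argument: the identity $\gamma^{(n)}(0)=(-1)^n\,\mathbb{E}[(W)_n(\tfrac12)^{W}]/\mathcal{M}(\tfrac12)$ that you obtain via the tilted law is exactly the formula the paper displays in Section~\ref{sec4}, and your observation that $\gamma^{(2)}(0)-\{\gamma'(0)\}^2-\gamma'(0)=\mathrm{Var}(\tilde W)\ge 0$ is the same fact the paper extracts by applying Cauchy--Schwarz to get $\mathbb{E}[Y_1^2(\tfrac12)^{Y_1}]\,\mathbb{E}[(\tfrac12)^{Y_1}]\ge\{\mathbb{E}[Y_1(\tfrac12)^{Y_1}]\}^2$. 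Where you genuinely differ is in handling both time parametrizations at once: the paper splits into $t\in\mathbb{Z}_{\ge0}$ and $t\in\mathbb{R}_{\ge0}$ and, in the latter case, re-derives everything by differentiating $\mathcal{M}(w)=e^{\lambda\{G_{Z_1}(w)-1\}}$ three times explicitly, arriving at $\lambda\,\mathbb{E}[Z_1^2(\tfrac12)^{Z_1}]\ge0$; you instead note that the compound Poisson variable $W=Z_1+\cdots+Z_X$ is itself a non-negative integer-valued variable whose PGF is $\mathcal{M}$, so the same tilting argument applies verbatim (and one can check that $\mathrm{Var}(\tilde W)=\lambda\,\mathbb{E}[Z_1^2(\tfrac12)^{Z_1}]$ for the tilted compound Poisson law, so the two computations agree). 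Your finiteness argument via $\sup_k k^n2^{-k}<\infty$ is also a bit more careful than the paper's appeal to absolute convergence of $\mathcal{M}^{(n)}$ on all of $|w|\le1$, which is delicate when the radius of convergence is exactly $1$ but is anyway only needed at $w=\tfrac12$; your point that the tilt is what rescues finiteness when $W$ is heavy-tailed is well taken. Finally, your identification of the cubic combination as $3\,\mathrm{Var}(\tilde W)-\mathbb{E}[(\tilde W-\mathbb{E}\tilde W)^3]$ is correct (I verified it reproduces the value $\tfrac{p(1-p)(23p-16)}{16\{\mathcal{M}(1/2)\}^3}$ in the paper's example) and gives more information than the paper, which only bounds its absolute value.
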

This proof is given in Section \ref{sec4}.

Our goal is to compute the $\varepsilon\rightarrow 0$ limit of the joint distribution function
\begin{align}
\lim_{\varepsilon\xrightarrow{}0}\mathbb{P}_{\hat{h}^{\varepsilon}_0}(\hat{h}^{\varepsilon}(\mathbf{t}, \mathbf{x}_1)\leq\mathbf{a}_1,\dots, \hat{h}^{\varepsilon}(\mathbf{t}, \mathbf{x}_m)\leq\mathbf{a}_m)
\label{ldisfunc}
\end{align}
for $\mathbf{x}_1<\mathbf{x}_2< \dots <\mathbf{x}_m \in\mathbb{R}$ and $\mathbf{a}_1,\dots, \mathbf{a}_m \in\mathbb{R}$ where $\P_{\hat{h}^{\varepsilon}_0}(\cdot)$ is the probability measure and $\hat{h}^{\varepsilon} (0,x)$ is the initial height profile.
Here we introduce the assumption necessary to calculate \eqref{ldisfunc}.
\begin{assumption}
\label{limcon}
For $\theta\in [-\pi, -\frac{\pi}{3})\cup(\frac{\pi}{3}, \pi]$, 
\begin{equation}
\label{condi10}
E\log|2-e^{i\theta}|+D\log|\gamma(1-e^{i\theta})|<0
\end{equation}
and
\begin{equation}
\label{condi11}
F\log|2-e^{i\theta}|-D\log|\gamma(e^{i\theta}-1)|<0
\end{equation}
where 
\begin{equation}
\label{D}
D=\frac{2}{\gamma^{(3)}(0)-3\gamma^{(2)}(0)\gamma^{'}(0)+2\{\gamma^{'}(0)\}^3-2\gamma^{'}(0)},
\end{equation}
\begin{equation}
\label{E}
E=\frac{\gamma^{(2)}(0)-\{\gamma^{'}(0)\}^2-\gamma^{'}(0)}{\gamma^{(3)}(0)-3\gamma^{(2)}(0)\gamma^{'}(0)+2\{\gamma^{'}(0)\}^3-2\gamma^{'}(0)},
\end{equation}
\begin{equation}
\label{F}
F=\frac{\{\gamma^{'}(0)\}^2-\gamma^{(2)}(0)-\gamma^{'}(0)}{\gamma^{(3)}(0)-3\gamma^{(2)}(0)\gamma^{'}(0)+2\{\gamma^{'}(0)\}^3-2\gamma^{'}(0)}
\end{equation}
and $\gamma(w)$ is introduced in \eqref{gamgam}.
\end{assumption}
Note that considering the well-known TASEP, Assumption \ref{limcon} holds similarly to Assumption \ref{imas}.

We will prove that the limit \eqref{ldisfunc} converges to the joint distribution function characterizing the KPZ fixed point defined in~\cite{Quastel}.
Now we introduce the KPZ fixed point. First we define UC and LC.
\begin{definition}{\rm(UC} and {\rm LC}~{\rm \cite{Quastel})}.\\
We set {\rm UC} as the space of upper semicontinuous functions $\hat{h}:\mathbb{R}\to[-\infty, \infty)$ with $\hat{h}(x)\leq C_1+C_2|x|$ for some $C_1, C_2<\infty$ and $\hat{h}(x)>-\infty$ for some $x$
and {\rm LC} as ${\rm LC}=\{\hat{g} : -\hat{g}\in {\rm UC}\}$.
\end{definition}
Next we put the integral representation for the Airy function.
\begin{definition}
\label{airy}
the integral representation for the Airy function is given by
\begin{equation*}
\displaystyle{\rm Ai}(z)=\frac{1}{2\pi i}\int_{\langle}dw \ e^{\frac{1}{3}w^3-zw},
\end{equation*}
where $\langle$ is the positively oriented contour going the straight lines from $e^{-\frac{i\pi}{3}}\infty$ to $e^{\frac{i\pi}{3}}\infty$ through $0$.
\end{definition}
Now we are ready to state the KPZ fixed point (for more detail, see~\cite{Quastel}).
\begin{definition}[The KPZ fixed point~\cite{Quastel}]
The KPZ fixed point is the unique Markov process on {\rm UC}, $(\hat{h}(\mathbf{t},\cdot))_{\mathbf{t}>0}$
with transition probabilities
\begin{equation}
\label{kpzfpdet}
\displaystyle\mathbb{P}_{\hat{h}_0}(\hat{h}(\mathbf{t},\mathbf{x}_1)\leq \mathbf{a}_1,\dots, \hat{h}(\mathbf{t},\mathbf{x}_m)\leq \mathbf{a}_m)=\det\left(\mathbf{I}-\chi_{\mathbf{a}}\mathbf{K}^{{\rm hypo}(\hat{h}_0)}_{\mathbf{t},{\rm ext}}\chi_{\mathbf{a}}\right)_{L^2(\{\mathbf{x}_1,\dots, \mathbf{x}_m\}\times\mathbb{R})}
\end{equation}
where in LHS, $\mathbf{x}_1<\mathbf{x}_2< \dots <\mathbf{x}_m \in\mathbb{R}$ and $\mathbf{a}_1,\dots, \mathbf{a}_m \in\mathbb{R}$, $\hat{h}_0\in{\rm UC}$ and $\mathbb{P}_{\hat{h}_0}$ means the measure on the process with initial data $\hat{h}_0$. In RHS, we give the kernel by
\begin{align}
\label{funct}
&~\mathbf{K}^{{\rm hypo}(\hat{h}_0)}_{\mathbf{t},{\rm ext}}(\mathbf{x}_i, v ;\mathbf{x}_j, u)
\notag
\\
&=-\frac{1}{\sqrt{4\pi(x_j-x_i)}}\exp\left(-\frac{(u-v)^2}{4(x_j-x_i)}\right)\1_{\mathbf{x}_i<\mathbf{x}_j}+\left(\mathbf{S}^{{\rm hypo}(\hat{h}^{-}_0)}_{\mathbf{t},-\mathbf{x}_i}\right)^{*}\mathbf{S}_{\mathbf{t}, \mathbf{x}_j}(v, u),
\\
&
\label{function}
~\displaystyle\mathbf{S}_{\mathbf{t}, \mathbf{x}}(v, u)
=\mathbf{t}^{-\frac{1}{3}}e^{\frac{2\mathbf{x}^3}{3\mathbf{t}^2}-\frac{(v-u)\mathbf{x}}{\mathbf{t}}} {\rm Ai}(-\mathbf{t}^{-\frac{1}{3}}(v-u)+\mathbf{t}^{-\frac{4}{3}}\mathbf{x}^2),
\\
\label{functione}
&
~\displaystyle \mathbf{S}^{{\rm hypo}(\hat{h})}_{\mathbf{t},\mathbf{x}}(v,u)=\mathbb{E}_{B(0)=v}[\mathbf{S}_{\mathbf{t}, \mathbf{x}-\bm{\tau}'}(B(\bm{\tau}'),u)\1_{\bm{\tau}'<\infty}],
\end{align}
where $(A)^*$ represents the adjoint of an integral operator $A$, ${\rm Ai}(z)$ is defined in Definition \ref{airy} and $B(x)$ is a Brownian motion 
with diffusion coefficient $2$ and $\bm{\tau}'$ is the hitting time of ${\rm hypo}(\hat{h})$ introduced in Definition \ref{epihy}.
\end{definition}
\begin{remark}
By using he integral representation for the Airy function defined in Definition \ref{airy}, we find that $\mathbf{S}_{\mathbf{t}, \mathbf{x}}(v, u)$~\eqref{function} can be expressed as
\begin{align}
\displaystyle\mathbf{S}_{\mathbf{t}, \mathbf{x}}(v, u)=\frac{1}{2\pi i}\int_{\langle}dw \ e^{\frac{\mathbf{t}}{3}w^3+\mathbf{x}w^2+(v-u)w}.
\end{align}
\end{remark}
Now we suppose that the limit
\begin{equation}
\label{Height}
\hat{h}_0=\lim_{\varepsilon\xrightarrow{}0}\hat{h}^{\varepsilon}(0, \cdot)
\end{equation}
exists. By(\ref{height}) and (\ref{Height}), (\ref{initial}), this assumption is rewritten as 
\begin{equation}
\label{varx}
\displaystyle\varepsilon^{\frac{1}{2}}[(X^{\varepsilon}_0)^{-1}(\mathbf{x})+2\varepsilon^{-1}\mathbf{x}-2]\xrightarrow[\varepsilon\xrightarrow{}0]{}-\hat{h}_0(-\mathbf{x}),
\end{equation}
where $(X^{\varepsilon}_0)^{-1}(\mathbf{x}):=2X^{-1}_0(-2\varepsilon^{-1}\mathbf{x}-1)$, LHS of \eqref{varx} is interpreted as a linear interpolation to make it a continuous function of $x\in\mathbb{R}$ and we chose the frame of reference by
\begin{equation}
\label{initial}
X^{-1}_0(-1)=1,
\end{equation}
that is to say, the particle labeled $1$ is initially the rightmost in $\mathbb{Z}_{<0}$.

Now, we consider the TASEP that satisfies Assumption \ref{as1}.
Under the above assumption \eqref{Height} and Assumption \ref{imas}, \ref{limcon}, we get the following result for the limiting joint distribution function~\eqref{ldisfunc} by using pointwise convergence of the kernel (see Proposition \ref{scaling}).
\begin{theorem}(One-sided fixed point formula).
\label{special}
We set $\hat{h}_0\in{\rm UC}$ with $\hat{h}_0(\mathbf{x})=-\infty$ for $\mathbf{x}>0$.
Then, for $\mathbf{x}_1<\mathbf{x}_2< \dots <\mathbf{x}_m \in\mathbb{R}$ and $\mathbf{a}_1,\dots, \mathbf{a}_m \in\mathbb{R}$, we obtain
\begin{equation}
\displaystyle\lim_{\varepsilon\xrightarrow{}0}\mathbb{P}_{\hat{h}^{\varepsilon}_0}(\hat{h}^{\varepsilon}(\mathbf{t}, \mathbf{x}_1)\leq\mathbf{a}_1,\dots, \hat{h}^{\varepsilon}(\mathbf{t},\mathbf{x}_m)\leq\mathbf{a}_m)
=
\det\left(\mathbf{I}-\chi_{\mathbf{a}}\mathbf{K}^{{\rm hypo}(\hat{h}_0)}_{\mathbf{t}, {\rm ext}}\chi_{\mathbf{a}}\right)_{L^2(\{\mathbf{x}_1,\dots, \mathbf{x}_m\}\times\mathbb{R})},
\end{equation}
where RHS is equivalent to \eqref{kpzfpdet}.
\end{theorem}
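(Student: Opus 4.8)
\emph{Proof strategy.} The plan is to start from the finite-$\varepsilon$ Fredholm determinant of Theorem~\ref{Main} and pass to the limit $\varepsilon\to0$ under the $1{:}2{:}3$ scaling \eqref{Hei}--\eqref{txscaling}. Using the relation \eqref{height} between the height function and particle positions together with the frame \eqref{initial}, the event $\{\hat{h}^{\varepsilon}(\mathbf{t},\mathbf{x}_i)\le \mathbf{a}_i\}$ is rephrased as $\{X_t(n_i) > a_i\}$ for indices $n_i$ and levels $a_i$ that are explicit (piecewise-linear) functions of $\mathbf{x}_i,\mathbf{a}_i,\mathbf{t},\varepsilon$, with $x = 2\varepsilon^{-1}\mathbf{x}$ and $t$ as in \eqref{txscaling}. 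Hence the left-hand side equals $\det(I-\bar\chi_a K_t\bar\chi_a)_{\ell^2(\{n_1,\dots,n_M\}\times\Z)}$ with $K_t$ as in \eqref{Kt}. First I would apply a diagonal similarity transformation to $K_t$ — which does not change the Fredholm determinant — in order to remove the factors $2^{\pm(z_2-z_1)}$ and the normalisations $\mathcal{M}(\tfrac12)^{\mp t}$, and rescale the lattice variables around the relevant characteristic exactly as in \cite{Quastel,Yuta}, so that $\ell^2(\{n_i\}\times\Z)$ embeds into $L^2(\{\mathbf{x}_i\}\times\R)$ with mesh $\varepsilon^{1/2}$ and the Riemann sums defining the determinant converge to integrals. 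The proof then splits into (i) pointwise convergence of the rescaled kernel to $\mathbf{K}^{{\rm hypo}(\hat{h}_0)}_{\mathbf{t},{\rm ext}}$ (this is Proposition~\ref{scaling}) and (ii) upgrading this to convergence of the Fredholm determinants.

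For (i), the term $-Q^{n_j-n_i}(x,y)\1_{n_i<n_j}$ is handled by a local central limit theorem for the ${\rm Geom}[\tfrac12]$ walk (which has jump mean $1$ and variance $2$): after rescaling it converges to $-\tfrac{1}{\sqrt{4\pi(\mathbf{x}_j-\mathbf{x}_i)}}\exp\!\big(-\tfrac{(u-v)^2}{4(\mathbf{x}_j-\mathbf{x}_i)}\big)\1_{\mathbf{x}_i<\mathbf{x}_j}$. The term $(\mathfrak{S}_{-t,-n_i})^{*}\bar{\mathfrak{S}}_{-t,n_j}$ is the analytic heart: in the integrals \eqref{stn}, \eqref{bstn} I would substitute $w\mapsto\tfrac12(1-w)$, turning the integrand into $\exp\{t\log\gamma(w)+(\text{linear and quadratic terms in }w)\}$ up to polynomial prefactors, with $\gamma$ as in \eqref{gamgam}. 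Since $\gamma(0)=1$ and the exponents have been calibrated through \eqref{txscaling}, $w=0$ is a critical point of the phase whose Taylor coefficients are precisely $\gamma'(0),\gamma^{(2)}(0),\gamma^{(3)}(0)$; Assumption~\ref{imas} makes the coefficient $D$ of \eqref{D} positive, so the cubic term has the correct sign. Deforming $\Gamma_0$ onto steepest descent through $w=0$ (a contour leaving $0$ at angles $\pm\pi/3$, matching Definition~\ref{airy}), rescaling $w=\varepsilon^{1/2}\zeta$ and completing the cube produces, in the limit, the representation $\frac{1}{2\pi i}\int_{\langle}dw\,e^{\frac{\mathbf{t}}{3}w^3+\mathbf{x}w^2+(v-u)w}=\mathbf{S}_{\mathbf{t},\mathbf{x}}(v,u)$; Theorem~\ref{main2} guarantees that the finite limiting coefficients (in particular the height shift $E$) make this expansion legitimate.

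The initial-data factor \eqref{sepi} needs a probabilistic input. I would invoke a strong coupling (KMT-type) of the rescaled walk $RW_{\lfloor\varepsilon^{-1}s\rfloor}$, suitably centred and scaled by $\varepsilon^{1/2}$, with a Brownian motion $B$ of diffusion coefficient $2$, so that the hitting time $\tau$ of the strict epigraph of $(X_0(k+1))_{k}$ converges to the hitting time $\bm{\tau}'$ of ${\rm hypo}(\hat{h}_0)$; the combination of the functional inversion $X_0\mapsto X_0^{-1}$ and the reflection $\mathbf{x}\mapsto-\mathbf{x}$ built into \eqref{varx} is exactly what turns ``strict epigraph of $X_0$'' into ``${\rm hypo}(\hat{h}_0)$''. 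Here the one-sided hypothesis $\hat{h}_0(\mathbf{x})=-\infty$ for $\mathbf{x}>0$ simplifies matters, since then ${\rm hypo}(\hat h_0)={\rm hypo}(\hat h_0^-)$ and $\1_{\tau<n}\to\1_{\bm{\tau}'<\infty}$ in the limit with no boundary effect from the right half-line. Combining the coupling with the uniform convergence of $\bar{\mathfrak{S}}_{-t,n-\tau}$ established in (i) and dominated convergence inside the expectation gives $\bar{\mathfrak{S}}^{{\rm epi}(X_0)}_{-t,n_j}\to\mathbf{S}^{{\rm hypo}(\hat{h}_0)}_{\mathbf{t},\mathbf{x}_j}$, completing Proposition~\ref{scaling}.

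For (ii), pointwise convergence of kernels must be promoted to convergence of Fredholm determinants, which requires an $\varepsilon$-uniform bound of the form $|K_t^{(\varepsilon)}(n_i,x;n_j,y)|\le C e^{-c(|v|+|u|)}$ in rescaled variables, so that the Hadamard inequality yields an $\varepsilon$-independent integrable majorant for the series expansion of the determinant and dominated convergence applies. This is precisely the role of Assumption~\ref{limcon}: along the portion of the steepest-descent contour bounded away from the saddle, conveniently parametrised so that the relevant moduli are $|2-e^{i\theta}|$ and $|\gamma(1\mp e^{i\theta})|$ for $\theta\in[-\pi,-\tfrac{\pi}{3})\cup(\tfrac{\pi}{3},\pi]$, conditions \eqref{condi10}--\eqref{condi11} say that the integrand decays, i.e.\ there is no competing critical point and the tail contribution is exponentially small uniformly in $\varepsilon$. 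Feeding these estimates into \eqref{stn}--\eqref{sepi} — the $\bar{\mathfrak{S}}^{{\rm epi}(X_0)}$ term being additionally controlled by optional stopping together with the linear growth bound $\hat{h}_0(\mathbf{x})\le C_1+C_2|\mathbf{x}|$ from the definition of ${\rm UC}$ — produces the desired Gaussian/exponential decay in $(v,u)$. I expect this $\varepsilon$-uniform trace-class estimate, especially the uniform control of the hitting-time term over admissible initial profiles, to be the main obstacle; once it is in place, dominated convergence identifies the limit with the determinant in \eqref{kpzfpdet}, which is the claim of Theorem~\ref{special}.
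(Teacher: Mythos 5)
Your proposal follows essentially the same route as the paper: translate the height events into particle-position events via \eqref{hXrelation}, apply Theorem~\ref{Main}, rescale the kernel, and establish the pointwise limits of $Q^{n_j-n_i}$ and $(\mathfrak{S}_{-t,-n_i})^{*}\bar{\mathfrak{S}}^{{\rm epi}(X_0)}_{-t,n_j}$ by the change of variables $w\mapsto\tfrac12(1-\varepsilon^{1/2}y)$ and steepest descent at the double saddle $w=0$, with Assumption~\ref{limcon} killing the contour away from the saddle and Donsker-type convergence handling the hitting-time factor, exactly as in Proposition~\ref{scaling} and the proof in Section~\ref{sspf}. The only notable difference is that you spell out the step from pointwise kernel convergence to convergence of the Fredholm determinants (uniform trace-class bounds plus dominated convergence), which the paper does not carry out itself but defers to Proposition~3.6 of \cite{Quastel}; this is a reasonable and arguably more complete presentation of the same argument.
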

This proof is given in Section \ref{sec5}.
\begin{remark}
In previous studies, the one-sided fixed point formula was obtained only for each model. However, the above results show that the one-sided fixed point formula was obtained for all TASEPs that satisfy Assumption \ref{as1}, \ref{imas}, \ref{limcon}.
\end{remark}

\begin{remark}
By using the similar argument in Theorem 3.8. in~\cite{Quastel}, we can remove the assumption $\hat{h}_0({\bf x})=-\infty$ for ${\bf x}>0$ in Theorem \ref{special} (See subsection 3.4 of \cite{Quastel} for more details.).
\end{remark}
To prove Theorem~\ref{special}, we use the following relationship between the particle positions $X_t(j)$
and the height function $h_t(z)$~\eqref{height}.
We put $s_1,\dots, s_k, m_1, \dots, m_k\in\mathbb{R}$ and $z_1,\dots, z_k, n_1,\dots, n_k\in\mathbb{Z}$.
Then, by Definition \ref{relation1}, we get
\begin{align}
\label{Xhrelation}
\P(h_t(z_1)\le s_1,\dots, h_t(z_k)\le s_k)=\P(X_t(n_1)\ge m_1,\dots, X_t(n_k)\ge m_k).
\end{align}
By the above relation, we see
\begin{multline}
\label{hXrelation}
\lim_{\varepsilon\xrightarrow{}0}\mathbb{P}_{\hat{h}^{\varepsilon}_0}(\hat{h}^{\varepsilon}(\mathbf{t}, \mathbf{x}_1)\leq\mathbf{a}_1,\dots, \hat{h}^{\varepsilon}(\mathbf{t}, \mathbf{x}_m)\leq\mathbf{a}_m)
=
\lim_{\varepsilon\rightarrow0}\displaystyle\mathbb{P}_{X^{\varepsilon}_0}
\left(X^{\varepsilon}_{t}(n_1)>a_1,\dots, X^{\varepsilon}_{t}(n_m)>a_m\right),
\end{multline}
where $a_1,\dots, a_m\in\mathbb{R}$ and $t,~n_i, a_i$ are scaled as
\begin{align}
t=D\varepsilon^{-\frac{3}{2}}\mathbf{t},
~
n_i=
E\varepsilon^{-\frac{3}{2}}\mathbf{t}-\varepsilon^{-1}\mathbf{x}_i
-\frac{1}{2}\varepsilon^{-\frac{1}{2}}\mathbf{a}_i+1,
~
a_i=2\varepsilon^{-1}\mathbf{x}_i-2,
\label{tnascaling}
\end{align}
where $D$ and $E$ are introduced in \eqref{D} and \eqref{E}, respectively.

Therefore our goal \eqref{hXrelation} can be gotten by taking the $\varepsilon\rightarrow 0$
limit of the expression~\eqref{pro} in Theorem~\ref{Main} under the scaling~\eqref{tnascaling}.
The major important step of this problem is the following proposition about pointwise convergence.
\begin{proposition}(Pointwise convergence).
\label{scaling}
We consider the TASEP that satisfies Assumption \ref{as1}.
Suppose that Assumption \ref{imas}, \ref{limcon} and \eqref{varx} hold.
Under the scaling (\ref{tnascaling}),(dropping the $i$ subscripts), if we set $z=G\varepsilon^{-\frac{3}{2}}\mathbf{t}+2\varepsilon^{-1}\mathbf{x}+\varepsilon^{-\frac{1}{2}}(u+\mathbf{a})-2$ and $y'=\varepsilon^{-\frac{1}{2}}v$, then we have for $\mathbf{t}>0$ as $\varepsilon\xrightarrow{} 0$,
\begin{align}
\label{A1}
&\mathbf{S}^{\varepsilon}_{-t,x}(v,u):=\varepsilon^{-\frac{1}{2}}\mathfrak{S}_{-t,-n}(y',z)\xrightarrow{}\mathbf{S}_{-\mathbf{t,x}}(v,u) 
\\
\label{A2}
&\bar{\mathbf{S}}^{\varepsilon}_{-t,-x}(v,u):=\varepsilon^{-\frac{1}{2}}\bar{\mathfrak{S}}_{-t,n}(y',z)\xrightarrow{}\mathbf{S}_{-\mathbf{t},-\mathbf{x}}(v,u)
\\
\label{A3}
&\bar{\mathbf{S}}^{\varepsilon, {\rm epi}(-h^{\varepsilon,-}_0)}_{-t,-x}(v,u):=\varepsilon^{-\frac{1}{2}}\bar{\mathfrak{S}}^{{\rm epi}(X_0)}_{-t,n}(y',z)\xrightarrow{}\mathbf{S}^{{\rm epi}(-\hat{h}^{-}_0)}_{-\mathbf{t},-\mathbf{x}}(v,u) 
\end{align}
pointwise, where
\begin{equation*}
G=\frac{2[\{\gamma^{'}(0)\}^2-\gamma^{(2)}(0)]}{\gamma^{(3)}(0)-3\gamma^{(2)}(0)\gamma^{'}(0)+2\{\gamma^{'}(0)\}^3-2\gamma^{'}(0)},
\end{equation*}
$\hat{h}^{-}_0(x)=\hat{h}_0(-x)$ for $x\geq0$,
$\mathbf{S}_{\mathbf{t}, \mathbf{x}}(v, u)$ is given by~\eqref{function} and for $\hat{g}\in {\rm LC}$,
$$\mathbf{S}^{{\rm epi}(\hat{g})}_{\mathbf{t},\mathbf{x}}(v,u)=\mathbb{E}_{B(0)=v}[\mathbf{S}_{\mathbf{t}, \mathbf{x}-\bm{\tau}'}(B(\bm{\tau}'),u)\1_{\bm{\tau^{'}}<\infty}]$$
and $\mathfrak{S}_{-t,-n}(z_1,z_2)$, $\bar{\mathfrak{S}}_{-t,n}(z_1,z_2)$ and $\bar{\mathfrak{S}}^{{\rm epi}(X_0)}_{-t,n}(z_1, z_2)$ are defined in \eqref{stn}, \eqref{bstn} and \eqref{sepi}, respectively.
\end{proposition}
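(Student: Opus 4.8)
The plan is to establish each of the three convergences \eqref{A1}, \eqref{A2}, \eqref{A3} by a saddle point analysis of the contour integrals \eqref{stn}, \eqref{bstn}, reducing everything to the Airy integral in Definition \ref{airy}. The starting observation is that, under the substitution $w \mapsto \tfrac{1}{2}(1-w)$ inside \eqref{stn}, the ratio $\mathcal{M}(w)/\mathcal{M}(\tfrac12)$ becomes exactly $\gamma(w)$ of \eqref{gamgam}, so that after scaling the integrand takes the form $\exp\{ t \log\gamma(w) + (\text{linear in } n,z,z_1) \cdot \log(\text{rational in } w)\}$. I would first write the full exponent $\Phi_\varepsilon(w)$ collecting all $w$-dependent pieces (the factor $(1-w)^n$, the power $w^{-(n+1+z_2-z_1)}$, and $\{\mathcal{M}(w)/\mathcal{M}(\tfrac12)\}^t$), insert the scalings \eqref{tnascaling} together with $z = G\varepsilon^{-3/2}\mathbf{t}+2\varepsilon^{-1}\mathbf{x}+\varepsilon^{-1/2}(u+\mathbf{a})-2$ and $y' = \varepsilon^{-1/2}v$, and Taylor-expand $\log\gamma$ and the logarithmic terms about the relevant critical point.

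The second step is to locate that critical point. Because the prefactor $\varepsilon^{-1/2}$ in \eqref{A1}–\eqref{A3} is the Airy-scaling normalization, the coalescing saddle must sit at the point where $\gamma$ is expanded, namely $w=0$ (equivalently the original variable at $\tfrac12$); this is precisely why $\gamma(w)$ and its derivatives $\gamma'(0),\gamma^{(2)}(0),\gamma^{(3)}(0)$ appear throughout. The choices of $A,B,C$ in \eqref{kpzscaling2}, of $D,E,F$ in \eqref{D}–\eqref{F}, and of $G$ in the statement are exactly engineered so that in the expansion $\Phi_\varepsilon(w_0 + \varepsilon^{1/2}\tilde w) = \text{const} + \tfrac{\mathbf{t}}{3}\tilde w^3 + \mathbf{x}\tilde w^2 + (v-u)\tilde w + o(1)$: the $\varepsilon^{-3/2}$-order term vanishes by the definition of $D$ (this uses Assumption \ref{imas} to guarantee the coefficient $\gamma^{(3)}(0)-3\gamma^{(2)}(0)\gamma'(0)+2\{\gamma'(0)\}^3-2\gamma'(0)$ is nonzero and positive, so $D$ is well-defined and the cubic has the right sign), the $\varepsilon^{-1}$-order term is killed by the choice of $E$ and the $2\varepsilon^{-1}\mathbf{x}$ shift, and the $\varepsilon^{-1/2}$-order term collapses to $(v-u)\tilde w$ after the $u,\mathbf{a},v$ substitutions. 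Matching the remaining constant against the prefactor $\mathbf{t}^{-1/3}e^{2\mathbf{x}^3/3\mathbf{t}^2 - (v-u)\mathbf{x}/\mathbf{t}}$ of \eqref{function} gives \eqref{A1}; \eqref{A2} is the mirror computation, using $\gamma(1-w)$ versus $\gamma(w)$ and the signs $F = -E$-type bookkeeping, with the contour for $\bar{\mathfrak{S}}$ deformed to the steepest-descent path through the same saddle.

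For \eqref{A3} the plan is to combine \eqref{A2} with Donsker's invariance principle, exactly as in \cite{Quastel}: the random walk $RW_k$ of Definition \ref{defRWtau} has $\mathrm{Geom}[\tfrac12]$ increments with mean $1$ and variance $2$, so under diffusive scaling $\varepsilon^{1/2} RW_{\lfloor \varepsilon^{-1}\cdot\rfloor}$ converges to a Brownian motion with diffusion coefficient $2$, and the hitting time $\tau$ of the strict epigraph of $(X_0(k+1))_k$ converges, via \eqref{varx}, to the hitting time $\bm{\tau}'$ of $\mathrm{hypo}(\hat h_0^-)$ (up to the reflection $x\mapsto -x$ relating epi and hypo). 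I would then pass the limit through the expectation in \eqref{sepi}, using the already-established uniform control on $\bar{\mathbf{S}}^\varepsilon_{-t,-x}$ from \eqref{A2} to justify dominated convergence. The main obstacle is the last point: turning the pointwise saddle-point estimate into a bound strong enough (uniform in the walk's position and stopping time, with an integrable tail) to exchange limit and expectation. This is where Assumption \ref{limcon} enters — the inequalities \eqref{condi10}, \eqref{condi11} are precisely the statement that, away from the critical arc $\theta\in(-\tfrac\pi3,\tfrac\pi3)$, the modulus of the integrand decays exponentially, which yields the tail bounds needed to dominate the contributions of large $|\tilde w|$ and of atypical random-walk paths. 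Verifying these tail estimates carefully — controlling $\mathfrak{S}$, $\bar{\mathfrak{S}}$ on the non-steepest-descent portion of $\Gamma_0$ and the random-walk expectation simultaneously — will be the technical heart of the argument, and I would carry it out after the main-term computation so that the target of the estimates is already known.
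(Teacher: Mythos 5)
Your proposal follows essentially the same route as the paper: the change of variables $w=\tfrac12(1-\varepsilon^{1/2}y)$ centering the integrand at the double saddle, the Taylor expansion of the exponent whose $\varepsilon^{-3/2}$, $\varepsilon^{-1}$, $\varepsilon^{-1/2}$ orders are killed or normalized by the choices of $D,E,F,G$ to leave $\tfrac{\mathbf{t}}{3}y^3+\mathbf{x}y^2+(v-u)y$, the splitting of the contour into an Airy arc and a remainder controlled by Assumption \ref{limcon}, the mirrored computation with $\bar\gamma(w)=\mathcal{M}(\tfrac12(1+w))/\mathcal{M}(\tfrac12)$ for \eqref{A2}, and Donsker's invariance principle plus convergence of hitting times for \eqref{A3}. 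The only cosmetic difference is that no ``matching of constants against the prefactor'' of \eqref{function} is needed, since the limit of the contour integral is directly the integral representation $\frac{1}{2\pi i}\int_{\langle}e^{\frac{\mathbf{t}}{3}w^3+\mathbf{x}w^2+(v-u)w}\,dw$ of $\mathbf{S}_{-\mathbf{t},\mathbf{x}}(v,u)$.
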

Proof is given in Section \ref{sec5}.
\section{Distribution representation for the TASEP }
\label{sec3}
\subsection{The representation of the transition probability for TASEP: Proof of Proposition \ref{m13} and Theorem \ref{s88}}
\label{s31}
In this subsection, we show that the transition probability can be expressed by a probability generating function of the distribution followed when the rightmost particle jumps.
First, we see that the property of the transition probability when the number of particles is one.
\begin{lemma}
\label{m4}
Suppose that Assumption \ref{as1} holds.
When $N=1$, for $x, y\in\mathbb{Z}$ such that $x\geq y$, 
\begin{equation}
\label{s23}
\mathbb{P}(X_t(1)=x| X_0(1)=y)=F_0(x-y, t)
\end{equation}
\end{lemma}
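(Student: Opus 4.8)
The plan is to specialize the determinant formula in Assumption \ref{as1} to the single-particle case and read off the claim directly. When $N=1$, the matrix $[F_{i-j}(x_{N+1-i}-y_{N+1-j},t)]_{1\le i,j\le N}$ is the $1\times 1$ matrix whose sole entry is $F_{0}(x_{1}-y_{1},t)=F_{0}(x-y,t)$, so \eqref{s22} immediately gives $\mathbb{P}(X_t(1)=x\mid X_0(1)=y)=F_0(x-y,t)$ for every $x,y\in\mathbb{Z}$. So the substantive content of the lemma is not the identity itself but the consistency of this with the probabilistic picture, namely that the right-hand side is supported on $\{x\ge y\}$, which matches the fact that the rightmost particle only jumps to the right; the hypothesis $x\ge y$ in the statement is exactly the range on which $F_0(x-y,t)$ is non-vanishing.

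Concretely, I would verify the support statement by examining the contour integral \eqref{s21} with $n=0$:
\begin{equation*}
F_0(x-y,t)=\frac{1}{2\pi i}\oint_{\Gamma_{0,1}}dw\,\frac{\mathcal{M}(t,w)}{w^{x-y+1}}.
\end{equation*}
Since $\mathcal{M}(t,\cdot)$ is analytic on $\{|w|<R\}$ with $R\ge 1$, the integrand is analytic at $w=1$ when $n=0$, so the contour $\Gamma_{0,1}$ may be deformed to a small circle $\Gamma_0$ around the origin without changing the value. If $x-y<0$, i.e. $x-y+1\le 0$, the integrand $w^{-(x-y+1)}\mathcal{M}(t,w)$ is analytic inside $\Gamma_0$ and the integral vanishes; hence $F_0(x-y,t)=0$ for $x<y$. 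For $x\ge y$ the integral picks up the Taylor coefficient of order $x-y$ of $\mathcal{M}(t,\cdot)$ at $0$, which is a genuine (generally nonzero) probability weight. This is precisely why the statement is phrased for $x\ge y$.

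The only point requiring a little care — and the closest thing to an obstacle — is justifying the contour deformation from $\Gamma_{0,1}$ to $\Gamma_0$ when $n=0$: one must note that the factor $(1-w)^{-n}$ degenerates to $1$ for $n=0$, removing the pole at $w=1$, and invoke analyticity of $\mathcal{M}(t,\cdot)$ on a neighborhood of the closed region between the two contours, which is guaranteed by $R\ge 1$. After that, everything is a direct evaluation, and the lemma follows.
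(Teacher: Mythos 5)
Your proof is correct and its core step --- substituting $N=1$ into the determinant formula \eqref{s22} to obtain the $1\times 1$ determinant $F_0(x-y,t)$ --- is exactly the paper's argument. The additional verification that $F_0(x-y,t)$ vanishes for $x<y$ (by deforming $\Gamma_{0,1}$ to $\Gamma_0$, which is legitimate since the factor $(1-w)^{-n}$ is trivial for $n=0$) is correct but goes beyond what the paper includes for this lemma.
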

\begin{proof}
It is easy to see that \eqref{s23} holds if $N=1$, $x_1=x$ and $y_1=y$ are substituted in \eqref{s22}.
\end{proof}
Next, we prove the space-homogeneity when $N=1$.
\begin{lemma}[space-homogeneity]
\label{s24}
Assume that Assumption \ref{as1} holds.
When $N=1$, for $x, y\in\mathbb{Z}$ such that $x\geq y$, 
\begin{equation*}
\mathbb{P}(X_t(1)=x| X_0(1)=y)=\mathbb{P}(X_t(1)=x-y| X_0(1)=0).
\end{equation*}
\end{lemma}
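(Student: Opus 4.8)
The plan is to derive the claim as an immediate consequence of Lemma \ref{m4}, which already identifies the one-particle transition probability with the single-variable function $F_0$. First I would invoke Lemma \ref{m4} with the given pair $x \geq y$, obtaining $\mathbb{P}(X_t(1)=x \mid X_0(1)=y) = F_0(x-y, t)$ via \eqref{s23}. Next I would apply Lemma \ref{m4} a second time, now to the pair $(x-y,\,0)$; the hypothesis ``first argument $\geq$ second argument'' built into \eqref{s23} reads $x-y \geq 0$, which is exactly the standing assumption $x \geq y$, so this application is legitimate and yields $\mathbb{P}(X_t(1)=x-y \mid X_0(1)=0) = F_0\big((x-y)-0,\, t\big) = F_0(x-y, t)$. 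Equating the two right-hand sides gives the asserted identity.

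The only point requiring attention is that the second use of Lemma \ref{m4} stays within the admissible range $x \geq y$ built into \eqref{s23}; as noted, this is automatic, so there is no genuine obstacle. Alternatively, one can read the statement off directly from the defining contour integral \eqref{s21}: for fixed $n$ and $t$, the factor $w^{-(x-n+1)}$ shows that $F_n(x,t)$ depends on $x$ only through the displacement, so the $N=1$ entry $F_0(x-y,t)$ of the determinant in Assumption \ref{as1} is a function of $x-y$ alone. This is simply the translation invariance of the dynamics on $\mathbb{Z}$; the lemma is a bookkeeping step that, together with Lemma \ref{m4}, prepares the reduction of the general transition probability to the probability generating function used in Proposition \ref{m13} and Theorem \ref{s88}.
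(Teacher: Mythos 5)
Your argument is correct and matches the paper's proof exactly: both apply Lemma \ref{m4} twice (to the pair $(x,y)$ and to $(x-y,0)$) and equate the resulting values of $F_0(x-y,t)$. The extra remark about translation invariance being visible in the contour integral \eqref{s21} is a fine observation but not needed.
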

\begin{proof}
By Lemma \ref{m4}, we get 
\begin{equation*}
\begin{split}
\mathbb{P}(X_t(1)=x| X_0(1)=y)&=F_0(x-y, t)\\
&=\mathbb{P}(X_t(1)=x-y| X_0(1)=0).
\end{split}
\end{equation*}
\end{proof}
Note that when $N=1$, the exclusion rule doesn't work, so the TASEP is just a one-sided jump random walk or a compound Poisson process.
For convenience, we set the following:
\begin{equation}
\label{benri}
\mathbb{P}(X_t(1)=x):=\mathbb{P}(X_t(1)=x| X_0(1)=0).
\end{equation}
Now, we show that the following holds.
\begin{lemma}
\label{mm8}
We consider the TASEP that satisfies Assumption \ref{as1}. Then the following two are equivalent:
\begin{enumerate}
\item $\displaystyle\mathcal{M}(t, w)=\sum_{x=0}^{\infty}w^x\mathbb{P}(X_t(1)=x)$ where radius of convergence is $R\geq 1$.
\item $F_0(x, t)=\mathbb{P}(X_t(1)=x).$
\end{enumerate}
\end{lemma}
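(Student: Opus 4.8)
The plan is to reduce both statements to a single fact: for $x\ge 0$ the quantity $F_0(x,t)$ equals the $x$-th Taylor coefficient of the analytic function $w\mapsto\mathcal{M}(t,w)$, while $F_0(x,t)=0$ for $x<0$. Before doing so I would record an elementary observation about the rightmost particle: under the normalisation $X_0(1)=0$ of \eqref{benri}, $X_t(1)$ makes only rightward moves — it is a one-sided jump random walk in the discrete-time case \eqref{m11} and a compound Poisson process in the continuous-time case \eqref{m12} — so $X_t(1)\ge 0$ almost surely, and hence $\mathbb{P}(X_t(1)=x)=0$ for every $x<0$.

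To establish the Taylor-coefficient identity I would start from the definition \eqref{s21} with $n=0$: the factor $(1-w)^{-n}$ then disappears, so the integrand $w\mapsto\mathcal{M}(t,w)\,w^{-x-1}$ is holomorphic on the punctured disc $0<|w|<R$, and the only singularity it can enclose is the pole at $w=0$ (for $n=0$ there is, in particular, no need for $\Gamma_{0,1}$ to wind around $w=1$). Contracting the contour to a small positively oriented circle $\{|w|=r\}$ with $0<r<1\le R$ and substituting the power series $\mathcal{M}(t,w)=\sum_{k\ge 0}\alpha_k(t)w^k$, valid for $|w|<R$, would then give $F_0(x,t)=\alpha_x(t)$ for $x\ge 0$ and, because $w^{-x-1}$ is entire in that range, $F_0(x,t)=0$ for $x<0$.

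It then remains to assemble the equivalence. If (1) holds, then $\alpha_k(t)=\mathbb{P}(X_t(1)=k)$, so $F_0(x,t)=\mathbb{P}(X_t(1)=x)$ for $x\ge 0$, while for $x<0$ both sides vanish by the preliminary observation; this is (2). Conversely, if (2) holds, then $\alpha_k(t)=F_0(k,t)=\mathbb{P}(X_t(1)=k)$ for $k\ge 0$, whence $\mathcal{M}(t,w)=\sum_{k\ge 0}\mathbb{P}(X_t(1)=k)w^k$; since the coefficients are probabilities of total mass at most $1$, this series converges for $|w|\le 1$ and so has radius of convergence at least $1$, which gives (1). The only step that is not automatic is the contour contraction: one needs $\Gamma_{0,1}$ to lie inside the disc of holomorphy of $\mathcal{M}(t,\cdot)$, which is immediate when $R>1$ and, in the borderline case $R=1$, is handled by reading the contour in the definition of $F_0$ as such a small circle about $0$ — legitimate because no pole at $w=1$ is present once $n=0$; for the concrete models of Example \ref{ex} one in fact has $R>1$ (or $R=\infty$). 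This, rather than any calculation, is the main obstacle.
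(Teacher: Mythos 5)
Your proof is correct, and the forward direction (i)$\Rightarrow$(ii) coincides in substance with the paper's: both amount to the Cauchy coefficient extraction $F_0(x,t)=\frac{1}{2\pi i}\oint_{\Gamma_0} w^{-x-1}\mathcal{M}(t,w)\,dw$, which the paper carries out by parametrizing a circle of radius $r<R$ and using $\int_0^{2\pi}e^{i(k-x)v}\,dv=2\pi\delta_{k,x}$. Where you genuinely diverge is the converse. The paper forms the generating function $\phi(s)=\sum_{x\geq0}F_0(x,t)s^x$, substitutes the contour integral, interchanges sum and integral, sums the geometric series (which requires $|s|<|w|$ along the contour, so the identity $\phi(s)=\mathcal{M}(t,s)$ is first obtained only for small $|s|$ and tacitly extended by uniqueness of power series), and invokes Cauchy's integral formula. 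You instead observe once and for all that $F_0(x,t)$ is the $x$-th Taylor coefficient of $\mathcal{M}(t,\cdot)$ at $0$ (and vanishes for $x<0$), after which both implications are pure coefficient matching. Your route buys a cleaner converse — no interchange of limits, no geometric-series domain restriction to remove afterwards — and your two edge-case remarks (that $\mathbb{P}(X_t(1)=x)=0=F_0(x,t)$ for $x<0$, and that for $n=0$ the contour need not encircle $w=1$, which is what makes the borderline case $R=1$ legitimate) are points the paper passes over silently, simply replacing $\Gamma_{0,1}$ by $\Gamma_0$ in \eqref{m7} without comment. The paper's version gains nothing you lack; both arguments are equally elementary.
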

\begin{proof}
First we prove $(\hspace{.01em}i\hspace{.01em})\Rightarrow(\hspace{.01em}ii\hspace{.01em})$.
By $\eqref{s21}$, 
\begin{equation}
\label{m7}
F_0(x,t)=\frac{1}{2\pi i}\oint_{\Gamma_{0}}dw\frac{1}{w^{x+1}}\mathcal{M}(t, w)
\end{equation}
where $\Gamma_{0}$ is any simple loop oriented anticlockwise and includes $w=0$.
By changing variables as $w=re^{iv}$ when we define $r\in\mathbb{R}$ such that $0<r<R$, 
\begin{equation}
\begin{split}
\label{m8}
\eqref{m7}&=\frac{1}{2\pi i}\int_{0}^{2\pi}dv \frac{1}{(re^{iv})^{x+1}}ire^{iv}\mathcal{M}(t, re^{iv})\\
&=\frac{1}{2\pi}\int_{0}^{2\pi}dv r^{-x}e^{-ixv}\mathcal{M}(t, re^{iv}).
\end{split}
\end{equation}
Since
$$\mathcal{M}(t, re^{iv})=\sum_{k=0}^{\infty}r^ke^{ikv}\mathbb{P}(X_t(1)=k)$$
is absolutely convergent,
\begin{equation}
\label{m9}
\begin{split}
\eqref{m8}&=\frac{1}{2\pi}\int_{0}^{2\pi}dv r^{-x}e^{-ixv}\sum_{k=0}^{\infty}r^ke^{ikv}\mathbb{P}(X_t(1)=k)\\
&=\frac{1}{2\pi}\sum_{k=0}^{\infty}r^{k-x}\mathbb{P}(X_t(1)=k)\int_{0}^{2\pi}e^{i(k-x)v}dv\\
&=\frac{1}{2\pi}\sum_{k=0}^{\infty}r^{k-x}\mathbb{P}(X_t(1)=k)2\pi\delta_{k,x}\\
&=\mathbb{P}(X_t(1)=x)
\end{split}
\end{equation}
where $\delta_{k, x}$ is the Kronecker delta, that is,
\begin{equation}
\label{benri2}
\delta_{k,x}:=
\begin{cases}
1 & \text{for $k=x$,}\\
0 & \text{otherwise.}
\end{cases}
\end{equation}
Next we show $(\hspace{.01em}ii\hspace{.01em})\Rightarrow(\hspace{.01em}i\hspace{.01em})$.
For $s\in\mathbb{C}$, we put 
\begin{equation*}
\phi(s)=\sum_{x=0}^{\infty}s^x \mathbb{P}(X_t(1)=x)
\end{equation*}
where radius of convergence is $R\geq 1$.
From the condition $(\hspace{.01em}ii\hspace{.01em})$,
\begin{equation}
\label{m10}
\phi(s)=\sum_{x=0}^{\infty}F_0(x, t) s^x
\end{equation}
holds. 
By substituting \eqref{m7} for \eqref{m10}, we obtain 
\begin{equation}
\begin{split}
\phi(s)&=\sum_{x=0}^{\infty}\left\{\frac{1}{2\pi i}\oint_{\Gamma_{0}}dw\frac{1}{w^{x+1}}\mathcal{M}(t, w)\right\}s^x\\
&=\frac{1}{2\pi i}\oint_{\Gamma_{0}}dw\frac{\mathcal{M}(t, w)}{w}\sum_{x=0}^{\infty}\left(\frac{s}{w}\right)^x\\
&=\frac{1}{2\pi i}\oint_{\Gamma_{0}}dw\frac{\mathcal{M}(t, w)}{w}\frac{1}{1-\frac{s}{w}}\\
&=\frac{1}{2\pi i}\oint_{\Gamma_{0}}dw\frac{\mathcal{M}(t, w)}{w-s}\\
&=\mathcal{M}(t, s)
\end{split}
\end{equation}
provided that the integration domain satisfy $|s|<|w|$.
Therefore, we get
\begin{equation*}
\mathcal{M}(t, w)=\sum_{x=0}^{\infty}w^x\mathbb{P}(X_t(1)=x)
\end{equation*}
where radius of convergence is $R\geq 1$.
\end{proof}
\begin{remark}
By \eqref{s22}, Lemma \ref{m4} and Lemma \ref{s24}, it can be seen that $(\mathrm{ii})~F_0(x, t)=\mathbb{P}(X_t(1)=x)$ holds.
Thus, Lemma \ref{mm8} implies that  the function $\mathcal{M}(t, w)$ which constitutes the transition probability of TASEP is given as the probability generating function.
\end{remark}
Now, we prove Proposition \ref{m13}.
\begin{proof}[Proof of Proposition \ref{m13}]
By Lemma \ref{m4}, Lemma \ref{s24} and Lemma \ref{mm8},
\begin{equation*}
\mathcal{M}(t, w)=\sum_{x=0}^{\infty}w^x\mathbb{P}(X_t(1)=x).
\end{equation*}
Now, we first show the case of $t\in\mathbb{Z}_{\geq 0}$.
By \eqref{m11}, 
\begin{equation}
\label{m14}
\begin{split}
\mathcal{M}(t, w)&=\mathbb{E}\left[w^{X_t(1)}\right]\\
&=\mathbb{E}\left[w^{Y_1+Y_2+\dots+Y_t}\right]\\
&=\left(\mathbb{E}\left[w^{Y_1}\right]\right)^t\\
&=\mathcal{M}(w)^t.
\end{split}
\end{equation}
Next, we prove the case of $t\in\mathbb{R}_{\geq 0}$.
By \eqref{m12} and \eqref{jyuuyou5},
\begin{equation*}
\begin{split}
\mathcal{M}(t, w)&=\mathbb{E}[w^{S_{N_t}}]\\
&=\sum_{n=0}^{\infty}\mathbb{E}[w^{S_{N_t}}| N_t=n]\mathbb{P}(N_t=n)\\
&=\sum_{n=0}^{\infty}\mathbb{E}[w^{S_{n}}]\mathbb{P}(N_t=n)\\
&=\sum_{n=0}^{\infty}\mathbb{E}[w^{Z_1+Z_2+\dots+Z_{n}}]\mathbb{P}(N_t=n)\\
&=\sum_{n=0}^{\infty}(\mathbb{E}[w^{Z_1}])^n \mathbb{P}(N_t=n)\\
&=\sum_{n=0}^{\infty}\{G_{Z_1}(w)\}^ne^{-\lambda t}\frac{(\lambda t)^n}{n!}\\
&=e^{\lambda t\{G_{Z_1}(w)-1\}}.
\end{split}
\end{equation*}
On the other hand, by \eqref{pgf},
\begin{equation}
\label{mtw2}
\begin{split}
\mathcal{M}(w)&=G_{X}(G_{Z_1}(w))\\
&=\sum_{n=0}^{\infty}\{G_{Z_1}(w)\}^ne^{-\lambda}\frac{\lambda^n}{n!}\\
&=e^{\lambda \{G_{Z_1}(w)-1\}}.
\end{split}
\end{equation}
Therefore we get 
$$\mathcal{M}(t, w)=\mathcal{M}(w)^t.$$
\end{proof}
Next, we show Theorem \ref{s88}.
\begin{proof}[Proof of Theorem \ref{s88}]
By \eqref{s21} and Proposition \ref{m13}, 
\begin{equation*}
\begin{split}
F_n(x,t)&=\frac{(-1)^n}{2\pi i}\oint_{\Gamma_{0,1}}dw\frac{(1-w)^{-n}}{w^{x-n+1}}\mathcal{M}(t, w)\\
&=\frac{(-1)^n}{2\pi i}\oint_{\Gamma_{0,1}}dw\frac{(1-w)^{-n}}{w^{x-n+1}}\mathcal{M}(w)^t\\
&=\overline{F}_n(x,t).
\end{split}
\end{equation*}
From the above, the transition probability of TASEP is given by
\begin{equation*}
\mathbb{P}(X_t=\vec{x}|X_0=\vec{y})=\det[\overline{F}_{i-j}(x_{N+1-i}-y_{N+1-j}, t)]_{1\leq i,j\leq N}.
\end{equation*}
\end{proof}
\subsection{Proof of Theorem \ref{Main}}
\label{s32}
In this subsection, we prove Theorem \ref{Main}.

It is easy to check that $\mathcal{M}(w)$ satisfies Assumption 1.1 in \cite{Quastel}.
Therefore, by Theorem 1.2 in \cite{Quastel} and Theorem \ref{s88}, we have the distribution of particle positions:
\begin{equation*}
\mathbb{P}(X_t(n_j)>a_j, j=1,\dots,M)=\det(I-\bar{\chi}_{a}K_t\bar{\chi}_{a})_{\ell^2(\{n_1,\dots,n_M\}\times\mathbb{Z})}
\end{equation*}
where
\begin{equation}
K_t(n_i, x  ; n_j, y)=-Q^{n_j-n_i}(x,y)\1_{n_i<n_j}+(\mathscr{S}_{-t, -n_i})^{*}\overline{\mathscr{S}}^{{\rm epi} (X_0)}_{-t,n_j}(x,y)
\end{equation}
and
\begin{equation*}
\mathscr{S}_{-t,-n}(z_1,z_2)=\frac{1}{2\pi i}\oint_{\Gamma_0}dw\frac{(1-w)^n}{2^{z_2-z_1}w^{n+1+z_2-z_1}}\mathcal{M}(w)^{t},
\end{equation*}
\begin{equation*}
\overline{\mathscr{S}}_{-t,n}(z_1,z_2)=\frac{1}{2\pi i}\oint_{\Gamma_0}dw\frac{(1-w)^{z_2-z_1+n-1}}{2^{z_1-z_2}w^{n}}\mathcal{M}(1-w)^{-t},
\end{equation*}
\begin{equation*}
\overline{\mathscr{S}}^{\rm epi(X_0)}_{-t,n}(z_1, z_2)=\mathbb{E}_{RW_0=z_1}\left[\,\overline{\mathscr{S}}_{-t, n-\tau}(RW_{\tau}, z_2)\1_{\tau<n}\right].
\end{equation*}
From \eqref{pro} and \eqref{Kt}, it is enough to check
\begin{equation*}
(\mathfrak{S}_{-t, -n_i})^{*}\bar{\mathfrak{S}}^{{\rm epi} (X_0)}_{-t,n_j}=(\mathscr{S}_{-t, -n_i})^{*}\overline{\mathscr{S}}^{{\rm epi} (X_0)}_{-t,n_j}
\end{equation*}
to prove Theorem \ref{Main}.
Because
\begin{equation*}
\mathfrak{S}_{-t,-n}(z_1,z_2)=\mathcal{M}\left(\frac{1}{2}\right)^{-t}\mathscr{S}_{-t,-n}(z_1,z_2)
\end{equation*}
and
\begin{equation*}
\overline{\mathfrak{S}}_{-t,n}(z_1,z_2)=\mathcal{M}\left(\frac{1}{2}\right)^{t}\overline{\mathscr{S}}_{-t,n}(z_1,z_2),
\end{equation*}
we get 
\begin{equation*}
(\mathfrak{S}_{-t, -n_i})^{*}\bar{\mathfrak{S}}^{{\rm epi} (X_0)}_{-t,n_j}=(\mathscr{S}_{-t, -n_i})^{*}\overline{\mathscr{S}}^{{\rm epi} (X_0)}_{-t,n_j}.
\end{equation*}
This completes the proof.
\section{The property of the coefficient of KPZ scaling : Proof of Theorem \ref{main2}}
\label{sec4}
In this section we prove Theorem \ref{main2}.
First, we show the case of $t\in\mathbb{Z}_{\geq 0}$.
Note that $\mathcal{M}(w)$ and $\mathcal{M}^{(n)}(w)$ have the same radius of convergence by the Cauchy-Hadamard theorem where $\mathcal{M}^{(n)}(w)$ is the n-th derivative of $\mathcal{M}(w)$. 
Because $\mathcal{M}(w)$ is absolutely convergent for $|w|\leq 1$, $\mathcal{M}^{(n)}(w)$ is absolutely convergent for $|w|\leq 1$.
Thus we get
\begin{equation*}
|\gamma^{(2)}(0)-\{\gamma^{'}(0)\}^2-\gamma^{'}(0)|<\infty
\end{equation*}
and
\begin{equation*}
|\gamma^{(3)}(0)-3\gamma^{(2)}(0)\gamma^{'}(0)+2\{\gamma^{'}(0)\}^3-2\gamma^{'}(0)|<\infty.
\end{equation*}

Now, we remark that
\begin{equation*}
\gamma^{(n)}(0)=(-1)^n\frac{\mathbb{E}[Y_1(Y_1-1)\dots(Y_1-n+1)(\frac{1}{2})^{Y_1}]}{\mathcal{M}(\frac{1}{2})}.
\end{equation*}
Using Cauchy-Schwarz inequality, we have
\begin{equation*}
\begin{split}
\gamma^{(2)}(0)-\{\gamma^{'}(0)\}^2-\gamma^{'}(0)&=\frac{\mathbb{E}[Y_1(Y_1-1)(\frac{1}{2})^{Y_1}]}{\mathcal{M}(\frac{1}{2})}-\left\{-\frac{\mathbb{E}[Y_1(\frac{1}{2})^{Y_1}]}{\mathcal{M}(\frac{1}{2})}\right\}^2+\frac{\mathbb{E}[Y_1(\frac{1}{2})^{Y_1}]}{\mathcal{M}(\frac{1}{2})}\\
&=\frac{\mathbb{E}[Y^2_1(\frac{1}{2})^{Y_1}]\mathbb{E}[(\frac{1}{2})^{Y_1}]-\{\mathbb{E}[Y_1(\frac{1}{2})^{Y_1}]\}^2}{\{\mathcal{M}(\frac{1}{2})\}^2}\geq 0.
\end{split}
\end{equation*}
Next, we prove the case of $t\in\mathbb{R}_{\geq 0}$.
Recall 
\begin{equation*}
\mathcal{M}(w)=e^{\lambda \{G_{Z_1}(w)-1\}}
\end{equation*}
from \eqref{mtw2}.
Note that $G_{Z_1}(w)$ and $G^{(n)}_{Z_1}(w)$ have the same radius of convergence by the Cauchy-Hadamard theorem where $G^{(n)}_{Z_1}(w)$ is the n-th derivative of $G_{Z_1}(w)$.
Since $G_{Z_1}(w)$ is absolutely convergent for $|w|\leq 1$, $G^{(n)}_{Z_1}(w)$ is absolutely convergent for $|w|\leq 1$.
Therefore, for $|1-w|\leq 2$, the derivatives of $\mathcal{M}(\frac{1}{2}(1-w))$ up to the third order are
\begin{equation}
\label{vbn1}
\frac{d}{dw}\left[\mathcal{M}\left(\frac{1}{2}(1-w)\right)\right]=\lambda\left\{\frac{d}{dw}\left[G_{Z_1}\left(\frac{1}{2}(1-w)\right)\right]\right\}e^{\lambda\{G_{Z_1}(\frac{1}{2}(1-w))-1\}},
\end{equation}
\begin{equation}
\label{vbn2}
\begin{split}
\frac{d^2}{dw^2}\left[\mathcal{M}\left(\frac{1}{2}(1-w)\right)\right]&=\lambda\left\{\frac{d^2}{dw^2}\left[G_{Z_1}\left(\frac{1}{2}(1-w)\right)\right]\right\}e^{\lambda\{G_{Z_1}(\frac{1}{2}(1-w))-1\}}\\
&+\lambda^2\left\{\frac{d}{dw}\left[G_{Z_1}\left(\frac{1}{2}(1-w)\right)\right]\right\}^2e^{\lambda\{G_{Z_1}(\frac{1}{2}(1-w))-1\}},
\end{split}
\end{equation}
\begin{equation}
\label{vbn3}
\begin{split}
\frac{d^3}{dw^3}\left[\mathcal{M}\left(\frac{1}{2}(1-w)\right)\right]&=\lambda\left\{\frac{d^3}{dw^3}\left[G_{Z_1}\left(\frac{1}{2}(1-w)\right)\right]\right\}e^{\lambda\{G_{Z_1}(\frac{1}{2}(1-w))-1\}}\\
&+3\lambda^2\left\{\frac{d^2}{dw^2}\left[G_{Z_1}\left(\frac{1}{2}(1-w)\right)\right]\right\}\left\{\frac{d}{dw}\left[G_{Z_1}\left(\frac{1}{2}(1-w)\right)\right]\right\}e^{\lambda\{G_{Z_1}(\frac{1}{2}(1-w))-1\}}\\
&+\lambda^3\left\{\frac{d}{dw}\left[G_{Z_1}\left(\frac{1}{2}(1-w)\right)\right]\right\}^3e^{\lambda\{G_{Z_1}(\frac{1}{2}(1-w))-1\}}.
\end{split}
\end{equation}
By \eqref{gamgam}, \eqref{vbn1}, \eqref{vbn2} and \eqref{vbn3}, we get
\begin{equation}
\label{wbn1}
\gamma^{'}(0)=\lambda\left\{\left.\frac{d}{dw}\left[G_{Z_1}\left(\frac{1}{2}(1-w)\right)\right]\right|_{w=0}\right\},
\end{equation}
\begin{equation}
\label{wbn2}
\gamma^{(2)}(0)=\lambda\left\{\left.\frac{d^2}{dw^2}\left[G_{Z_1}\left(\frac{1}{2}(1-w)\right)\right]\right|_{w=0}\right\}+\lambda^2\left\{\left.\frac{d}{dw}\left[G_{Z_1}\left(\frac{1}{2}(1-w)\right)\right]\right|_{w=0}\right\}^2
\end{equation}
and
\begin{equation*}
\begin{split}
\gamma^{(3)}(0)&=\lambda\left\{\left.\frac{d^3}{dw^3}\left[G_{Z_1}\left(\frac{1}{2}(1-w)\right)\right]\right|_{w=0}\right\}\\
&+3\lambda^2\left\{\left.\frac{d^2}{dw^2}\left[G_{Z_1}\left(\frac{1}{2}(1-w)\right)\right]\right|_{w=0}\right\}\left\{\left.\frac{d}{dw}\left[G_{Z_1}\left(\frac{1}{2}(1-w)\right)\right]\right|_{w=0}\right\}\\
&+\lambda^3\left\{\left.\frac{d}{dw}\left[G_{Z_1}\left(\frac{1}{2}(1-w)\right)\right]\right|_{w=0}\right\}^3.
\end{split}
\end{equation*}
Because $\left.\frac{d^n}{dw^n}\left[G_{Z_1}\left(\frac{1}{2}(1-w)\right)\right]\right|_{w=0}$ is absolutely convergent, we have
\begin{equation*}
|\gamma^{(2)}(0)-\{\gamma^{'}(0)\}^2-\gamma^{'}(0)|<\infty
\end{equation*}
and
\begin{equation*}
|\gamma^{(3)}(0)-3\gamma^{(2)}(0)\gamma^{'}(0)+2\{\gamma^{'}(0)\}^3-2\gamma^{'}(0)|<\infty.
\end{equation*}
Thus, by \eqref{wbn1} and \eqref{wbn2}, we get
\begin{equation*}
\begin{split}
\gamma^{(2)}(0)-\{\gamma^{'}(0)\}^2-\gamma^{'}(0)&=\lambda\left\{\left.\frac{d^2}{dw^2}\left[G_{Z_1}\left(\frac{1}{2}(1-w)\right)\right]\right|_{w=0}-\left.\frac{d}{dw}\left[G_{Z_1}\left(\frac{1}{2}(1-w)\right)\right]\right|_{w=0}\right\}\\
&=\lambda\left\{\mathbb{E}\left[Z_1(Z_1-1)\left(\frac{1}{2}\right)^{Z_1}\right]+\mathbb{E}\left[Z_1\left(\frac{1}{2}\right)^{Z_1}\right]\right\}\\
&=\lambda~\mathbb{E}\left[Z^2_1\left(\frac{1}{2}\right)^{Z_1}\right]\geq 0.
\end{split}
\end{equation*}
This completes the proof.
\section{Asymptotics}
\label{sec5}
In this section, given Assumptions \ref{imas} and Assumption \ref{limcon}, we take the KPZ scaling limit for the TASEP which satisfies Assumption \ref{as1} and prove Proposition \ref{scaling}.
After that, we prove Theorem \ref{special} by using Proposition \ref{scaling}.
\subsection{Proof of Proposition \ref{scaling}}
First, we show \eqref{A1}.
By changing variables $\displaystyle w=\frac{1}{2}(1-\varepsilon^{\frac{1}{2}}y)$, we get
\begin{equation}
\begin{split}
\label{e}
\mathbf{S}^{\varepsilon}_{-t,x}(v,u)&=\frac{1}{2\pi i}\oint_{C_{\varepsilon}}dy\frac{\{\frac{1}{2}(1+\varepsilon^{\frac{1}{2}}y)\}^n}{2^{z-y^{'}+1}\{\frac{1}{2}(1-\varepsilon^{\frac{1}{2}}y)\}^{n+1+z-y'}}\left\{\frac{\mathcal{M}(\frac{1}{2}(1-\varepsilon^{\frac{1}{2}}y))}{\mathcal{M}\left(\frac{1}{2}\right)}\right\}^{t}\\
&=\frac{1}{2\pi i}\oint_{C_{\varepsilon}}dy\frac{(1+\varepsilon^{\frac{1}{2}}y)^n}{(1-\varepsilon^{\frac{1}{2}}y)^{n+1+z-y'}}\gamma\left(\varepsilon^{\frac{1}{2}}y\right)^t
\end{split}
\end{equation}
where $C_{\varepsilon}$ is a circle of radius $\varepsilon^{-\frac{1}{2}}$ centred at $\varepsilon^{-\frac{1}{2}}$.
So as to use the saddle point method, we rewrite~\eqref{e} as
\begin{equation}
\label{fF3}
\frac{1}{2\pi i}\oint_{C_{\varepsilon}}e^{f(\varepsilon^{\frac{1}{2}}y)+\varepsilon^{-1}F_2(\varepsilon^{\frac{1}{2}}y)+\varepsilon^{-\frac{1}{2}}F_1(\varepsilon^{\frac{1}{2}}y)+F_0(\varepsilon^{\frac{1}{2}}y)}dy,
\end{equation}
where the functions $f(x)$ and $F_i(x),~i=0,1,2$ are defined by
\begin{align}
&f(x)=E\hat{t}\log(1+x)-F
\hat{t}\log(1-x)+D\hat{t}\log\gamma(x),
\\
&F_2(x)=-\mathbf{x}\log(1-x^2),~F_1(x)=(v-u-\frac{1}{2}\mathbf{a})\log(1-x)-\frac{1}{2}\mathbf{a}\log(1+x),~ F_0(x):=\log(1+x)
\end{align}
where $\hat{t}:=\varepsilon^{-\frac{3}{2}}\mathbf{t}$ and $D$, $E$ and $F$ are introduced in \eqref{D}, \eqref{E} and \eqref{F}, respectively.
Calculating the derivatives of $f(x)$ up to the third order, we obtain
\begin{equation*}
f^{'}(x)=E\hat{t}\frac{1}{1+x}+F\hat{t}\frac{1}{1-x}+D\hat{t}\frac{\gamma^{'}(x)}{\gamma(x)},
\end{equation*}
\begin{equation*}
f^{(2)}(x)=-E\hat{t}\frac{1}{(1+x)^2}+F\hat{t}\frac{1}{(1-x)^2}+D\hat{t}\frac{\gamma^{(2)}(x)\gamma(x)-\{\gamma^{'}(x)\}^2}{\{\gamma(x)\}^2},
\end{equation*}
\begin{equation*}
f^{(3)}(x)=2E\hat{t}\frac{1}{(1+x)^3}+2F\hat{t}\frac{1}{(1-x)^3}+D\hat{t}\frac{\gamma^{(3)}(x)\{\gamma(x)\}^2-3\gamma^{(2)}(x)\gamma^{'}(x)\gamma(x)+2\{\gamma^{'}(x)\}^3}{\{\gamma(x)\}^3}.
\end{equation*}
Therefore, we find $f(x)$ has the double saddle point at $x=0$, 
\begin{equation*}
 f(0)=0, \ f'(0)=0, \ f^{(2)}(0)=0 \ {\rm and} \ f^{(3)}(0)=2\hat{t}.
\end{equation*}
Thus, for small $\varepsilon$, we obtain
\begin{equation}
\label{fex}
f(\varepsilon^{\frac{1}{2}}y)\approx\frac{\mathbf{t}}{3}y^3.
\end{equation}
For $F_i(x),~i=0,1,2$, we easily see
\begin{equation}
\label{Fasy}
\varepsilon^{-1}F_2(\varepsilon^{\frac{1}{2}}y)\approx\mathbf{x}y^2, \ \varepsilon^{-\frac{1}{2}}F_1(\varepsilon^{\frac{1}{2}}y)\approx(u-v)y, \ F_0(\varepsilon^{\frac{1}{2}}y)\approx0.
\end{equation}
Now, we check the convergence of the integration path.
We divide $C_{\varepsilon}$ into two parts $\langle_{\varepsilon} \  \cup \  C^{\frac{\pi}{3}}_{\varepsilon}$ where $\langle_{\varepsilon}$ is the part of Airy contour $\langle$ within the ball of radius $\varepsilon^{-\frac{1}{2}}$ centred at $\varepsilon^{-\frac{1}{2}}$, and $C^{\frac{\pi}{3}}_{\varepsilon}$ is the part of $C_{\varepsilon}$ to the right of $\langle$. 
Note that $\mathbf{S}_{\mathbf{t}, \mathbf{x}}(u, v)=\mathbf{S}_{\mathbf{-t}, \mathbf{x}}(v, u)$  where $\mathbf{S}_{\mathbf{t}, \mathbf{x}}(u, v)$ is introduced by~\eqref{function}.
By~\eqref{fex}, and~\eqref{Fasy}, we get
\begin{equation*}
\lim_{\e\rightarrow 0}\frac{1}{2\pi i}\int_{\langle_\varepsilon} e^{f(\varepsilon^{\frac{1}{2}}y)+\varepsilon^{-1}F_2(\varepsilon^{\frac{1}{2}}y)+\varepsilon^{-\frac{1}{2}}F_1(\varepsilon^{\frac{1}{2}}y)+F_0(\varepsilon^{\frac{1}{2}}y)}dy=\mathbf{S}_{\mathbf{-t}, \mathbf{x}}(v, u)
\end{equation*}

Next, we prove that the part coming from $C^{\frac{\pi}{3}}_{\varepsilon}$ vanishes as $\varepsilon\rightarrow0$
To see this note that the real part of the exponent of the integral over $C_{\varepsilon}$ in (\ref{e}), parametrized as $y=\varepsilon^{-\frac{1}{2}}(1-e^{i\theta})$, is given by 
\begin{equation*}
\frac{1}{2}\varepsilon^{-\frac{3}{2}}\mathbf{t}\left[\left(E+\mathcal{O}(\varepsilon^{\frac{1}{2}})\right)\log|2-e^{i\theta}|+D\log|\gamma(1-e^{i\theta})|\right].
\end{equation*}
Note that the $y\in C^{\frac{\pi}{3}}_{\varepsilon}$ correspond to $\frac{\pi}{3}<|\theta|\leq\pi$.
From \eqref{condi10} of Assumption \ref{limcon},
\begin{equation*}
\frac{1}{2}\varepsilon^{-\frac{3}{2}}\mathbf{t}\left[E\log|2-e^{i\theta}|+D\log|\gamma(1-e^{i\theta})|\right]<0.
\end{equation*}
Thus, for sufficiently small $\varepsilon$, the exponent there is less than $-\varepsilon^{-\frac{3}{2}}\kappa\mathbf{t}$ for some $\kappa>0$.
Hence the part $C_{\e}^{\frac{\pi}{3}}$ of the integral vanishes and this completes the proof of~\eqref{A1}.

Next, we prove \eqref{A2}.
By changing variables $\displaystyle w=\frac{1}{2}(1-\varepsilon^{\frac{1}{2}}y)$, 
\begin{equation*}
\begin{split}
\bar{\mathbf{S}}^{\varepsilon}_{-t,-x}(v,u)&=\frac{1}{2\pi i}\oint_{C_{\varepsilon}}dy\frac{\{\frac{1}{2}(1+\varepsilon^{\frac{1}{2}}y)\}^{z-y'+n-1}}{2^{y'-z+1}\{\frac{1}{2}(1-\varepsilon^{\frac{1}{2}}y)\}^{n}}\left\{\frac{\mathcal{M}(\frac{1}{2}(1+\varepsilon^{\frac{1}{2}}y))}{\mathcal{M}\left(\frac{1}{2}\right)}\right\}^{-t}\\
&=\frac{1}{2\pi i}\oint_{C_{\varepsilon}}dy\frac{(1+\varepsilon^{\frac{1}{2}}y)^{z-y'+n-1}}{(1-\varepsilon^{\frac{1}{2}}y)^{n}}\overline{\gamma}\left(\varepsilon^{\frac{1}{2}}y\right)^{-t}
\end{split}
\end{equation*}
where $C_{\varepsilon}$ is a circle of radius $\varepsilon^{-\frac{1}{2}}$ centred at $\varepsilon^{-\frac{1}{2}}$ and
\begin{equation*}
\overline{\gamma}(w)=\frac{\mathcal{M}(\frac{1}{2}(1+w))}{\mathcal{M}\left(\frac{1}{2}\right)}.
\end{equation*}
We remark that
\begin{equation*}
\overline{\gamma}(0)=\gamma(0),~\overline{\gamma}^{'}(0)=-\gamma^{'}(0),~\overline{\gamma}^{(2)}(0)=\gamma^{(2)}(0),~\overline{\gamma}^{(3)}(0)=-\gamma^{(3)}(0).
\end{equation*}
Using the saddle point method and \eqref{condi11} of Assumption \ref{limcon}, we can also show \eqref{A2} in the similar way to \eqref{A1}.

For \eqref{A3}, we can prove it similarly to (3.17) of Lemma 3.5 in \cite{Quastel}.
Therefore, only a sketch of the proof is given here.
We set the scaled walk $\displaystyle\mathbf{B}^{\varepsilon}(x)=\varepsilon^{\frac{1}{2}}(\text{RW}_{\varepsilon^{-1}x}+2\varepsilon^{-1}x-1)$ for $x\in\varepsilon\mathbb{Z}_{\geq0}$, interpolated linearly in between, and put $\bm\tau^{\varepsilon}$ as the hitting time by $\mathbf{B}^{\varepsilon}$ of ${\rm epi}(-\hat{h}^{\varepsilon}(0,\cdot)^{-})$
where $\hat{h}^{\varepsilon}({\bf t, x})$ is introduced by~\eqref{Hei} and $\hat{h}^{\varepsilon}({\bf t, x})^-=\hat{h}^{\varepsilon}({\bf t, -x})$.
By Donsker's invariance principle \cite{Patrick}, $\displaystyle\mathbf{B}^{\varepsilon}(x)$ converges locally uniformly in distribution to a Brownian motion $\mathbf{B}(x)$ with diffusion coefficient $2$.
Also, combining this with (\ref{varx}) and Proposition 3.2 in \cite{Quastel}, we see that the hitting time $\bm\tau^{\varepsilon}$ converges to $\bm\tau$. (For more detail,
see Lemma 3.5 in \cite{Quastel}).)
This leads to (\ref{A3}).
\subsection{Proof of Theorem \ref{special}}
\label{sspf}
In this subsection, we show Theorem \ref{special} by using Propositions \ref{scaling}.
This proof is almost the same as Proposition 3.6 in \cite{Quastel}.
First, we change variables in the kernel as in Proposition \ref{scaling}.
Then, for $z_i=G\varepsilon^{-\frac{3}{2}}\mathbf{t}+2\varepsilon^{-1}\mathbf{x}_i+\varepsilon^{-\frac{1}{2}}(u_i+\mathbf{a}_i)-2$, we compute the limiting kernel
\begin{equation*}
\mathbf{K}_{\lim}(\mathbf{x}_i, u_i ; \mathbf{x}_j, u_j):=\lim_{\varepsilon\xrightarrow{}0}\varepsilon^{-\frac{1}{2}}(\bar{\chi}_{2\varepsilon^{-1}\mathbf{x}-2}K_t \bar{\chi}_{2\varepsilon^{-1}\mathbf{x}-2})(z_i, z_j)
\end{equation*}
where
\begin{equation*}
G=\frac{2[\{\gamma^{'}(0)\}^2-\gamma^{(2)}(0)]}{\gamma^{(3)}(0)-3\gamma^{(2)}(0)\gamma^{'}(0)+2\{\gamma^{'}(0)\}^3-2\gamma^{'}(0)}
\end{equation*}
and  $\gamma(w)$ is defined in \eqref{gamgam}.
We remark that the change of variables turns $\bar{\chi}_{2\varepsilon^{-1}\mathbf{x}-2}(z)$ into $\bar{\chi}_{-\mathbf{a}}(u)$.
We obtain $n_i<n_j$ for small $\varepsilon$ if and only if $\mathbf{x}_j<\mathbf{x}_i$ and in this case we get
\begin{equation}
\label{kernel22}
\lim_{\varepsilon\xrightarrow{}0}\varepsilon^{-\frac{1}{2}}Q^{n_j-n_i}(z_i,z_j)=e^{(\mathbf{x}_i-\mathbf{x}_j)\partial^2}(u_i,u_j).
\end{equation}
For the second term in (\ref{Kt}), we have
\begin{equation*}
\begin{split}
\varepsilon^{-\frac{1}{2}}(\mathfrak{S}_{-t, -n_i})^{*}\bar{\mathfrak{S}}^{\rm epi(X_0)}_{-t,n_j}(z_i,z_j)&=\varepsilon^{-\frac{1}{2}}\int_{-\infty}^{\infty}d\nu(\mathfrak{S}_{-t, -n_i})^{*}(z_i,\nu)\bar{\mathfrak{S}}^{\rm epi(X_0)}_{-t,n_j}(\nu,z_j)\\
&=\varepsilon^{-1}\int_{-\infty}^{\infty}d\nu(\mathfrak{S}_{-t, -n_i})^{*}(z_i,\varepsilon^{-\frac{1}{2}}\nu)\bar{\mathfrak{S}}^{\rm epi(X_0)}_{-t,n_j}(\varepsilon^{-\frac{1}{2}}\nu,z_j)\\
&=\int_{-\infty}^{\infty}d\nu (\mathbf{S}^{\varepsilon}_{-t,x_i})^{*}(u_i,\nu)\bar{\mathbf{S}}^{\varepsilon, \rm{epi}(-h^{\varepsilon,-}_0)}_{-t,-x_j}(\nu,u_j)\\
&=(\mathbf{S}^{\varepsilon}_{-t,x_i})^{*}\bar{\mathbf{S}}^{\varepsilon, \rm{epi}(-h^{\varepsilon,-}_0)}_{-t,-x_j}(u_i,u_j).
\end{split}
\end{equation*}
By Proposition \ref{scaling}, we obtain
\begin{equation}
\label{kernel23}
\lim_{\varepsilon\xrightarrow{}0}\varepsilon^{-\frac{1}{2}}(\mathfrak{S}_{-t, -n_i})^{*}\bar{\mathfrak{S}}^{\rm epi(X_0)}_{-t,n_j}(z_i,z_j)=(\mathbf{S}_{-\mathbf{t}, \mathbf{x}_i})^{*}\mathbf{S}^{\rm{epi}(-\hat{h}^{-}_0)}_{-\mathbf{t},-\mathbf{x}_j}(u_i,u_j).
\end{equation}
By \eqref{kernel22} and \eqref{kernel23}, we get
\begin{equation*}
\mathbf{K}_{\lim}(\mathbf{x}_i, u_i ; \mathbf{x}_j, u_j)=-e^{(\mathbf{x}_i-\mathbf{x}_j)\partial^2}(u_i,u_j)\1_{\mathbf{x}_i>\mathbf{x}_j}+(\mathbf{S}_{-\mathbf{t}, \mathbf{x}_i})^{*}\mathbf{S}^{\rm{epi}(-\hat{h}^{-}_0)}_{-\mathbf{t},-\mathbf{x}_j}(u_i,u_j)
\end{equation*}
surrounded by projection $\bar{\chi}_{-\mathbf{a}}$.
It is nicer to have projection $\chi_{\mathbf{a}}$, so we change variables $u_i\mapsto-u_i$ and replace the Fredholm determinant of the kernel by that of its adjoint to get $\det\left(\mathbf{I}-\chi_{\mathbf{a}}\mathbf{K}^{{\rm hypo}(\hat{h}_0)}_{\mathbf{t}, {\rm ext}}\chi_{\mathbf{a}}\right)$ with $\mathbf{K}^{{\rm hypo}(\hat{h}_0)}_{\mathbf{t}, {\rm ext}}(u_i, u_j)=\mathbf{K}_{\lim}(\mathbf{x}_j, -u_j ; \mathbf{x}_i, -u_i)$.

By using $\textstyle(\mathbf{S}_{\mathbf{t}, \mathbf{x}})^{*}\mathbf{S}_{\mathbf{t}, \mathbf{-x}}=I$ and $\textstyle\mathbf{S}^{{\rm epi}(\hat{h})}_{-\mathbf{t},\mathbf{x}}(v,u)=\mathbf{S}^{{\rm hypo}(-\hat{h})}_{\mathbf{t},\mathbf{x}}(-v,-u)$ (see \cite{Quastel} for more information on these equations), we have 
\begin{equation*}
\mathbf{K}^{{\rm hypo}(\hat{h}_0)}_{\mathbf{t},{\rm ext}}(\mathbf{x}_i, \cdot ;\mathbf{x}_j, \cdot)=-e^{(\mathbf{x}_j-\mathbf{x}_i)\partial^2}\1_{\mathbf{x}_i<\mathbf{x}_j}+\left(\mathbf{S}^{{\rm hypo}(\hat{h}^{-}_0)}_{\mathbf{t},-\mathbf{x}_i}\right)^{*}\mathbf{S}_{\mathbf{t}, \mathbf{x}_j}.
\end{equation*}
\appendix
\section{The KPZ fixed point for the continuous time TASEP with jump rate $\beta$}
\label{Appendix}
In this section, we use our method to show that the KPZ fixed point is obtained in the continuous time TASEP  with jump rate $\beta\in(0, \infty)$.
From \eqref{beta232} and Proposition \ref{m13}, we get 
\begin{equation*}
\mathcal{M}(w)=e^{\beta(w-1)}.
\end{equation*}
Besides, by \eqref{gamgam}, we have
\begin{equation*}
\gamma(w)=e^{-\frac{\beta}{2}w}.
\end{equation*}
If  $\gamma(w)$ satisfies Assumption \ref{imas} and Assumption \ref{limcon}, we can show Theorem \ref{special} using Theorem \ref{Main} and Proposition \ref{scaling}. 
Therefore, we prove that $\gamma(w)$ satisfies Assumption \ref{imas} and Assumption \ref{limcon}.

First, we show that $\gamma(w)$ satisfies Assumption \ref{imas}.
Calculating the derivatives of $\gamma(w)$ up to the third order, we get
\begin{equation}
\label{appen22}
\gamma^{'}(w)=-\frac{\beta}{2}e^{-\frac{\beta}{2}w}, ~\gamma^{(2)}(w)=\frac{\beta^2}{4}e^{-\frac{\beta}{2}w}, ~\gamma^{(3)}(w)=-\frac{\beta^3}{8}e^{-\frac{\beta}{2}w}.
\end{equation}
Substituting $w=0$ for \eqref{appen22} gives
\begin{equation}
\label{appen23}
\gamma^{'}(0)=-\frac{\beta}{2}, ~\gamma^{(2)}(0)=\frac{\beta^2}{4}, ~\gamma^{(3)}(0)=-\frac{\beta^3}{8}.
\end{equation}
Thus, we obtain 
\begin{equation*}
\gamma^{(3)}(0)-3\gamma^{(2)}(0)\gamma^{'}(0)+2\{\gamma^{'}(0)\}^3-2\gamma^{'}(0)=\beta>0.
\end{equation*}
Next we prove that $\gamma(w)$ satisfies  Assumption \ref{limcon}.
By \eqref{D}, \eqref{E}, \eqref{F}, and \eqref{appen23}, we have
\begin{equation}
\label{appen234}
D=\frac{2}{\beta}, ~ E=\frac{1}{2}, ~ F=\frac{1}{2}.
\end{equation}
Note that $\log(1+x)< x$ for $x\in(-1, \infty)\setminus\{0\}$.
For $\theta\in [-\pi, -\frac{\pi}{3})\cup(\frac{\pi}{3}, \pi]$, by \eqref{appen234}, we obtain
\begin{equation*}
E\log|2-e^{i\theta}|+D\log|\gamma(1-e^{i\theta})|<\frac{1}{4}(4-4\cos\theta)+\cos\theta-1=0
\end{equation*}
and
\begin{equation*}
F\log|2-e^{i\theta}|-D\log|\gamma(e^{i\theta}-1)|<\frac{1}{4}(4-4\cos\theta)+\cos\theta-1=0.
\end{equation*}
This completes the proof.

\end{document}